\newtheorem{theorem}{Theorem}[section]
\newtheorem{lemma}[theorem]{Lemma}
\newtheorem{proposition}[theorem]{Proposition}
\newtheorem{corollary}[theorem]{Corollary}
\newtheorem{definition}[theorem]{Definition}
\numberwithin{equation}{section}
\begin{document}

\newcommand{\bb}{\mathfrak{b}}
\newcommand{\cc}{\mathfrak{c}}
\newcommand{\N}{\mathbb{N}}
\newcommand{\R}{\mathbb{R}}
\newcommand{\A}{{\mathbb{R}}_+\cup\{0\}}
\newcommand{\forces}{\Vdash}
\newcommand{\LL}{\mathbb{L}}
\newcommand{\K}{\mathbb{K}}
\newcommand{\Q}{\mathbb{Q}}

\newcommand{\To}{\longrightarrow}

\title{A Banach space in which every injective operator is surjective}

\author{Antonio Avil\'{e}s}
\thanks{First author by was supported by MEC and FEDER (Project MTM2011-25377), {\em Ramon y Cajal} contract (RYC-2008-02051) and an FP7-PEOPLE-ERG-2008 action.}
\email{avileslo@um.es}
\address{Departamento de Matem\'{a}ticas, Universidad de Murcia, 30100 Murcia (Spain)}

\author{Piotr Koszmider}
\thanks{The second author was partially supported by the National Science Center research grant DEC-2011/01/B/ST1/00657. } 
\email{P.Koszmider@Impan.pl}
\address{Institute of Mathematics, Polish Academy of Sciences,
ul. \'Sniadeckich 8,  00-956 Warszawa, Poland}

%
\subjclass{}
%
%
%
\begin{abstract} 
We construct an infinite dimensional Banach space of continuous functions $C(K)$ such that every one-to-one operator $T:C(K)\To C(K)$ is onto.
\end{abstract}

\maketitle

\markright{}

\section{Introduction}

Already S. Banach has asked if every infinite dimensional Banach space $X$ has a proper subspace 
isomorphic to $X$ (\cite{banach}). 
It took many decades until this problem has been solved in the negative by T. Gowers in
\cite{gowershyperplane}. Thus, in spaces $X$ like in \cite{gowershyperplane} all isomorphisms $T: X\rightarrow X$
must be onto $X$. This property is shared by spaces with few operators  constructed and investigated
for example in \cite{gowersmaurey} or \cite{argyrostolias} 
based on hereditarily indecomposable Banach spaces (abbreviated HI) as well
as in Banach spaces of the form $C(K)$ with few operators first constructed in \cite{few}.
In this paper answering a question from \cite{spectrum} we strengthen this property proving the following:

\begin{theorem}
There is an infinite dimensional Banach space $X$ such that whenever a bounded linear operator
$T: X\rightarrow X$ is injective, then it is an isomorphism onto $X$. Moreover $X$
is of the form $C(K)$.
\end{theorem}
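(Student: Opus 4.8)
The plan is to construct $K$ as a connected compact space without isolated points for which one can establish three properties, and then to derive the theorem from them. The properties are: (A) every bounded operator $T\colon C(K)\To C(K)$ is a \emph{weak multiplication}, i.e. $T=gI+S$ for some $g\in C(K)$ and some weakly compact $S$ (as in the $C(K)$-spaces with few operators of \cite{few}); (B) every non-empty $G_\delta$ subset of $K$ has non-empty interior; and (C) for every non-empty open $U\subseteq K$ and every weakly compact $S\colon C(K)\To C(K)$, the restriction of $S$ to the infinite dimensional ideal $J_U=\{f\in C(K):f|_{K\setminus U}=0\}$ is not injective. Property (B) is in fact unavoidable: a non-empty closed nowhere dense $G_\delta$ set $F$ would be a zero set, $F=g^{-1}(0)$ with $g\ge 0$, and then $M_g\colon f\mapsto gf$ would be injective, since $\{g\neq 0\}$ is dense, but not surjective, since $g$ is not bounded below; this would already contradict the statement we are after.

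Granting (A)--(C), the theorem follows quickly. Let $T\colon C(K)\To C(K)$ be injective and write $T=gI+S$ with $S$ weakly compact. On a $C(K)$ space the weakly compact operators are exactly the strictly singular ones, and these form a closed ideal under which the Fredholm property and the Fredholm index are invariant. If $g$ is invertible in $C(K)$, that is $\min_K|g|>0$, then $M_g$ is invertible, hence $T=M_g+S$ is Fredholm of index $0$; an injective Fredholm operator of index $0$ is onto, so $T$ is onto. If $g$ is not invertible, then by compactness $Z(g)=g^{-1}(0)$ is a non-empty closed $G_\delta$, so by (B) its interior $V$ is non-empty. Every $f\in J_V$ satisfies $gf=0$, hence $Tf=Sf$, so $T$ agrees with $S$ on $J_V$; by (C) there is $0\neq f\in J_V$ with $Sf=0$, and then $Tf=gf+Sf=0$, contradicting injectivity. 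Thus the second case never occurs, and the theorem is proved modulo the construction.

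All the difficulty is concentrated in producing a single connected $K$ that satisfies (A), (B) and (C) at once. I would build $K$ as the inverse limit of a continuous $\omega_1$-chain of metrizable compacta, refining the techniques used for $C(K)$ spaces with few operators: at successor steps one glues suitable connecting spaces so that, in the limit, no operator on $C(K)$ can exceed a multiplication up to a weakly compact error, every would-be nowhere dense $G_\delta$ is destroyed, and every weakly compact operator is forced to annihilate a non-zero function supported in some small open set. The essentially new point, compared with \cite{few} and the question raised in \cite{spectrum}, is that (C) imposes a uniform degeneracy on \emph{all} weakly compact operators, and this has to be made compatible with (A) and with connectedness; reconciling these requirements is where I expect the main obstacle, and the bulk of the work, to lie.
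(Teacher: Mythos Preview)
Your reduction from (A)--(C) to the theorem is fine and is essentially the paper's Proposition~2.8 specialized to the weak-multiplication case; note also that (C) is not an independent requirement but follows from (B): any nonempty open set in a compact almost $P$-space contains continuum many pairwise disjoint nonempty open subsets (run the Cantor-tree argument of Lemma~\ref{pspaceccc} inside it), and a weakly compact operator on $C(K)$ must annihilate all but countably many members of an uncountable pairwise disjoint family.

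The genuine gap is in the construction. You propose to obtain simultaneously (A) \emph{weak multiplications} and (B) \emph{almost $P$-space}. The paper explicitly identifies this as the obstacle: all known ZFC techniques for forcing every operator on $C(K)$ to be a weak multiplication (as in \cite{few, plebanek, big, fajardo, grande}) rely on $K$ satisfying the countable chain condition, which is used to make each $K\setminus\{x\}$ $C^*$-embedded and hence to upgrade weak multipliers to weak multiplications. But by Lemma~\ref{pspaceccc} no infinite compact almost $P$-space can be ccc, so (A) and (B) are in direct tension with the available methods. The paper's way around this is precisely \emph{not} to aim for weak multiplications: it settles for weak multipliers (Borel $g$, possibly discontinuous at countably many points) and then carries out a more delicate Fredholm-index argument on the adjoint. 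Your plan, as stated, walks straight into the difficulty the paper is designed to circumvent, and you give no indication of how to get (A) without ccc; the paper in fact says this may require extra axioms such as $\diamondsuit$.

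There is a second, more quantitative problem with your outline: an inverse limit along an $\omega_1$-chain of metrizable compacta has weight at most $\aleph_1$, but a compact almost $P$-space contains $\mathfrak{c}$ pairwise disjoint open sets and hence has weight at least $\mathfrak{c}$. So your construction scheme can only produce an almost $P$-space under CH. The paper avoids this by working with a $\mathfrak{c}$-chain of subalgebras of $\mathcal{P}(\omega)/\mathrm{Fin}$ (totally disconnected, not connected), and by proving a new criterion (Theorem~\ref{generalization}) that replaces the usual ``separation by suprema'' with ``separation by submorphisms'', which is what allows the few-operators argument to go through without ccc.
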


Our approach is to construct a Banach space  of the form $C(K)$ with few operators
in the sense of \cite{few}. Additional special properties of the compact space present in our construction 
($K$ is an almost $P$-space) guarantee that every injective operator is an isomorphism
and so previously known arguments for $C(K)$ spaces with few operators can be used to conlude that the operator is onto. This seems the
only possible approach of modifying known spaces with few operators that can work.
Indeed,
separable Banach spaces and HI spaces cannot serve for 
our purposes because the main property may be shared only by finite dimensional 
spaces or those which have nonseparable dual space in the weak$^*$ topology (see \cite{spectrum}):

\begin{proposition} Suppose $X$ is an infinite dimensional  Banach space
whose dual ball which is separable in the weak$^*$ topology. Then
there is an injective 
operator on $X$ which is not  an automorphism of $X$. In particular this property is shared by
all separable infinite dimensional Banach spaces and all HI spaces.
\end{proposition}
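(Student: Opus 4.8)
The plan is to exhibit a single concrete bounded operator $T:X\to X$ that is injective but whose range is a proper (in fact non‑closed) subspace; since an injective surjection is an automorphism by the open mapping theorem, this is exactly what the statement requires.

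First I would use the hypothesis to fix a sequence $(f_n)_{n\ge 1}$ in $B_{X^*}$ that is weak$^*$-dense in $B_{X^*}$, and observe that it separates the points of $X$: if $x\neq 0$, take by Hahn--Banach an $f\in B_{X^*}$ with $f(x)=\|x\|$, and any $f_n$ lying in the weak$^*$-neighbourhood $\{g\in B_{X^*}:|g(x)-f(x)|<\|x\|/2\}$ of $f$ satisfies $f_n(x)\neq 0$. Independently, since $X$ is infinite dimensional I would choose a normalized basic sequence $(e_n)_{n\ge 1}$ in $X$, with its continuous coordinate functionals $e_n^*$ on the closed span $Y=\overline{\mathrm{span}}\{e_n:n\ge 1\}$, and then set
\[
T(x)=\sum_{n\ge 1}2^{-n}f_n(x)\,e_n .
\]
Because $\sum_{n\ge 1}2^{-n}|f_n(x)|\,\|e_n\|\le\|x\|$, the series converges absolutely, so $T$ is well defined, bounded with $\|T\|\le 1$, and takes values in $Y$.

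Next I would verify the two properties. For injectivity, if $T(x)=0$ then applying $e_n^*$ term by term gives $2^{-n}f_n(x)=0$ for every $n$, hence $x=0$ by the separation property. For non-surjectivity, consider $y=\sum_{n\ge 1}n^{-2}e_n\in Y\subseteq X$ (again an absolutely convergent series); if $y=T(x)$ for some $x$, then comparing coefficients via the $e_n^*$ forces $f_n(x)=2^{n}n^{-2}$, an unbounded sequence, which contradicts $|f_n(x)|\le\|x\|$ for all $n$. Hence $T$ is injective and not onto, so it is not an automorphism of $X$. For the ``in particular'' clause, a separable Banach space has a weak$^*$-compact and weak$^*$-metrizable, hence weak$^*$-separable, dual ball, and HI spaces are separable, so both fall under the hypothesis.

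I do not expect a serious obstacle: the whole content is the interplay between the summability weights $2^{-n}$, which force $T$ to be bounded, and the comparatively slowly decaying coefficients $n^{-2}$ of the target $y$, which decay too slowly for $y$ to be $T(x)$ for any $x$ (as $(f_n(x))_n$ is always bounded by $\|x\|$). The only genuinely external ingredient is the classical fact that every infinite dimensional Banach space contains a basic sequence, which is what supplies continuous coordinate functionals; if one wished to avoid it one could instead build a biorthogonal system directly by induction using Hahn--Banach and finite codimension, but invoking the basic sequence theorem is cleaner.
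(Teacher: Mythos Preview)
Your main argument is correct and is, in essence, an explicit proof of the result the paper simply quotes from Goldberg--Kruse: if $B_{X^*}$ is weak$^*$-separable then $X$ admits an injective compact operator. Your operator $T=\sum_n 2^{-n}f_n(\cdot)\,e_n$ is nuclear (the rank-one summands have summable norms), hence compact, so ``compact operators are not onto in infinite dimensions'' would already finish the job without producing the explicit $y$. The paper's proof is therefore shorter because it outsources exactly this construction to a reference; your version has the merit of being self-contained and of making the basic-sequence/biorthogonal mechanism visible.

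There is, however, a genuine gap in the ``in particular'' clause. Your justification for HI spaces is that they are separable, but this is false: there exist nonseparable HI spaces (indeed the Argyros--Tolias memoir cited in the paper constructs such examples). What is true, and what the paper actually uses, is the theorem (Argyros--Todor\v{c}evi\'c) that every HI space embeds isomorphically into $\ell_\infty$; since the coordinate functionals on $\ell_\infty$ restrict to a countable norming set, any subspace of $\ell_\infty$ has weak$^*$-separable dual ball. So the HI case really does require a nontrivial structural fact about HI spaces, not merely that they are separable. You should replace your one-line justification by this embedding argument.
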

\begin{proof} By a result from \cite{goldberg}, if the dual ball of $X$ is separable in
the weak$^*$ topology, then there is a compact 
injective operator $T: X\rightarrow X$. A compact operator cannot be onto an infinite dimensional Banach space.  
As  any HI space embeds into $\ell_\infty$ (A. IV. 6 of \cite{argyrosramsey}) whose dual
ball is separable in the weak$^*$ topology, we conclude that such spaces have separable dual
balls as well.
\end{proof}
We also prove  (see \ref{necessity}, \ref{pspaceccc} and the remarks after it)
that a Banach space of the form $C(K)$ where all injective operators are automorphisms cannot  
be one of the known constructions as in \cite{few, plebanek, big, fajardo, grande}.

Another remark is that our space satisfies a version of the invariant subspace property for nonseparable spaces, namely every operator $T:X\To X$ has an invariant subspace $Y$ such that both $Y$ and $X/Y$ are of density $\mathfrak{c}$. Indeed, the subspace $Y$ can be chosen to be complemented and such that still every injective operator $Y\To Y$ or $X/Y\To X/Y$ is an isomorphism. This is analogous to what happens in some separable Banach spaces with few operators~\cite{argyroshaydon,argyrosmotakis}, where it can be proven that every operator has a proper infinite-dimensional invariant subspace. 

We wish to express our gratitude to Amin Kaidi. The discussion with him during a visit of the first author to the University of Almeria is in the origin of this paper. 

\section{Almost $P$-spaces and weak multipliers}

If $K$ is a compact and Hausdorff and $g\in C(K)$ we can define
an operator $T_g:C(K)\rightarrow C(K)$ by $T_g(f)=fg$, likewise for
$g$ a Borel bounded function on $K$ we can define ${_{g^*}T}: M(K)\rightarrow M(K)$
by $\int f d{_{g^*}T}(\mu)=\int fgd\mu$ for every $\mu\in M(K)$ and $f\in C(K)$.

\begin{definition}
Let $T:C(K)\rightarrow C(K)$ be a linear bounded operator.
We say that $T$ is a weak multiplication if and only if there
is a $g\in C(K)$ and a weakly compact operator $S:C(K)\rightarrow C(K)$
such 
$$T=T_g+ S.$$
We say that $T$ is a weak multiplier if and only if there
is a Borel  $g^*:K\rightarrow K$ and a weakly compact operator $S:C^*(K)\rightarrow C^*(K)$
such that 
$$T^*={_{g^*}T}+ S.$$
We will say that a space $C(K)$ has few operators if and only if
all operators on $C(K)$ are weak multiplications or weak multipliers.
\end{definition}

Of course a weak multiplication is  a weak multiplier and the form
of the weak multiplication is much nicer. However, 
the notion of a weak multiplier plays a more natural role for example having all
operators weak multipliers is invariant under isomorphisms of $C(K)$ spaces  and having all operators weak
multiplications is not (\cite{schlackow}). 

It is easy to construct a weak multiplier on $C(K)$ which is not a weak
multiplication if we have a function which is discontinuous at one point and
the discontinuity cannot be removed by changing the value in this point. For example
$\chi_{[0,{1\over 2}]}$ is such a function on $[0,1]$ and we get
$T:C([0,1])\rightarrow C([0,1])$ defined by $$T(f)=\chi_{[0,{1\over 2}]} (f-f({1\over {2}})),$$
then $T^*={_{g^*}T}$
where $g^*=\chi_{[0,{1\over 2}]}$ is the Borel 
function which is discontinuous at one point  ${1\over 2}$. 
We say that an $x\in K$ is $C^*$-embedded if and only every continuous 
$g: K\setminus\{x\}\rightarrow [0,1]$ can be extended to a  continuous
function on $K$. It is proved in 2.7 of \cite{few} that every weak multiplier is a weak multiplication
on $C(K)$ if and only if for every $x\in K$ the subspace $K\setminus\{x\}$ is $C^*$-embedded in $K$.

\begin{definition}[\cite{levy}] A topological space is called an almost $P$-space if and only if 
every nonempty $G_\delta$-set has non-empty interior.
\end{definition}

\begin{proposition}\label{necessity} If $K$ is not a weak $P$-space, then there is a an injective
operator $T: C(K)\rightarrow C(K)$ which is not an automorphism of $C(K)$.
\end{proposition}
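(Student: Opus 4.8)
The plan is to realize $T$ as a multiplication operator $T_g$ for a suitable non\-negative $g\in C(K)$ whose zero set is nonempty but has empty interior. First I would extract the relevant topological datum from the hypothesis: since $K$ is not a weak $P$-space, there is a nonempty $G_\delta$-set $G=\bigcap_{n}U_n\subseteq K$, each $U_n$ open, with $\operatorname{int}(G)=\emptyset$. Fixing $p\in G$ and using that the compact Hausdorff space $K$ is normal, I would recursively choose open sets $W_n$ with $W_0=K$ and $p\in W_{n+1}\subseteq\overline{W_{n+1}}\subseteq W_n\cap U_n$. Then $F:=\bigcap_{n\ge 1}W_n=\bigcap_{n\ge 1}\overline{W_n}$ is at once a $G_\delta$-set and closed, it contains $p$, and the inclusion $F\subseteq G$ forces $\operatorname{int}(F)=\emptyset$. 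Being a closed $G_\delta$-set in a normal space, $F$ is a zero set: $F=g^{-1}(0)$ for some $g\in C(K)$ with $0\le g\le 1$.

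Next I would set $T:=T_g$, the multiplication operator $f\mapsto fg$, which is a bounded linear operator on $C(K)$ (indeed a weak multiplication, with remainder $S=0$, in the sense of the definition above). Injectivity would be immediate: if $T_g(f)=fg=0$, then $f$ vanishes on $K\setminus F=\{x:g(x)\ne 0\}$, which is dense because $\operatorname{int}(F)=\emptyset$, so $f=0$ by continuity. Surjectivity would fail because every function in the range of $T_g$ vanishes on $F$, and $F\ne\emptyset$; thus, for instance, the constant function $1$ does not lie in the range of $T_g$. Hence $T_g$ is injective but not onto, and therefore, by the open mapping theorem, not an automorphism of $C(K)$, which is what we want.

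The only step that is more than bookkeeping is the passage from an arbitrary nonempty $G_\delta$-set with empty interior to a closed one, equivalently to a zero set $g^{-1}(0)$ with empty interior; this is exactly where normality of $K$ enters, through the shrinking construction of the sets $W_n$, and I do not anticipate any genuine obstacle beyond it. It is worth noting that the operator produced here is of the most elementary possible type, an honest multiplication operator, which fits with the almost $P$-space property being precisely the hypothesis one needs for the $C(K)$-spaces built in the rest of the paper.
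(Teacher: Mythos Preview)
Your proof is correct and follows essentially the same approach as the paper: both produce a multiplication operator $T_g$ where $g\in C(K)$ has a nonempty zero set with empty interior, obtained by shrinking the given $G_\delta$ to a closed $G_\delta$ via normality. The only cosmetic differences are that the paper constructs $g$ explicitly as a series $\sum g_n$ (rather than citing the zero-set characterization of closed $G_\delta$'s) and argues that $T_g$ is not bounded below (hence has non-closed range), whereas you more directly observe that the constant function $1$ is not in the range.
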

\begin{proof} Let $U_n\subseteq K$ be open subsets of $K$ for $n\in\N$ such that
$\bigcap_{n\in\N} U_n=F\not=\emptyset$ has empty interior. We may assume that
${\overline{U_{n+1}}}\subseteq U_n$ for each $n\in\N$, and so that $F$ is closed.
 Let $g_n: K\rightarrow [0, 1/2^n]$ be 
 continuous functions satisfying $g_n(x)=1/2^n$ whenever $x\in K\setminus U_n$ and
$g_n(x)=0$ whenever $x\in F$. Let $g=\Sigma_{n\in \N}g_n$. 
Note that $g\in C(K)$ assumes value $0$ in $x\in K$
if and only if $x\in F$. Consider the operator $T_g$. The operator is injective, because any nonzero $f\in C(K)$
has values separated from zero on an open set, i.e., not included in $F$. $T_g$ is not an isomorphism
because whenever $n\in \N$ and the support of $f$ is inluded in $U_n$  we have $||T_g(f)||\leq (1/2^{n-1})||f||$.
It follows from the open mapping theorem that the
image of $T_g$ is not closed, in particular, $T_g$ is not onto $C(K)$.
\end{proof}

Recall that a topological space satisfies the countable chain condition
if and only if it does not admit an uncountable pairwise disjoint collection of open sets.

\begin{lemma}\label{pspaceccc} No infinite compact Hausdorff almost $P$-space $K$ satisfies the countable chain condition.
\end{lemma}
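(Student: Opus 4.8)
The plan is to argue by contradiction. Suppose $K$ is an infinite compact Hausdorff almost $P$-space that satisfies the countable chain condition; I will construct an uncountable pairwise disjoint family of nonempty open subsets of $K$, which is the desired contradiction.

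First I dispose of isolated points. Let $D\subseteq K$ be the set of isolated points of $K$. If $D$ is uncountable, the singletons $\{d\}$, $d\in D$, already form an uncountable disjoint family of open sets, so we are done; hence assume $D$ is countable. Then $D$ is an $F_\sigma$ set, so $K\setminus D$ is a $G_\delta$ set, and it is nonempty because a discrete compact space is finite. By the almost $P$-space property, $K\setminus D$ has nonempty interior $U_0$. Since $U_0$ is open in $K$ and disjoint from $D$, no nonempty open subset of $U_0$ has an isolated point (such a point would be isolated in $K$, i.e. would lie in $D$); in particular, every nonempty open subset of $U_0$ has at least two points.

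Next I build by transfinite recursion on $\alpha<\omega_1$ nonempty open sets $W_\alpha\subseteq U_0$ that are decreasing (i.e. $W_\alpha\subseteq W_\beta$ for $\beta<\alpha$), together with nonempty open sets $V_\alpha\subseteq W_\alpha$, maintaining $\overline{W_{\alpha+1}}\subseteq W_\alpha$ and $V_\alpha\cap W_{\alpha+1}=\emptyset$. Put $W_0=U_0$. At a successor stage, $W_\alpha$ is nonempty, hence by the previous paragraph it contains two distinct points $a,b$; separate them inside $W_\alpha$ by disjoint open sets (using that $K$ is Hausdorff), let $V_\alpha$ be the one containing $a$, and use regularity of $K$ to shrink the one containing $b$ to an open set $W_{\alpha+1}$ with $\overline{W_{\alpha+1}}\subseteq W_\alpha$. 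At a limit stage $\alpha$, the closed sets $\overline{W_\beta}$, $\beta<\alpha$, form a decreasing family of nonempty closed subsets of the compact space $K$, so $\bigcap_{\beta<\alpha}\overline{W_\beta}\neq\emptyset$; since $\overline{W_{\beta+1}}\subseteq W_\beta$ whenever $\beta+1<\alpha$, this shows $\bigcap_{\beta<\alpha}W_\beta\neq\emptyset$, and being a countable intersection of open sets it is a nonempty $G_\delta$ set, so by the almost $P$-space property it has nonempty interior; let $W_\alpha$ be that interior. This completes the recursion. Finally, for $\beta<\alpha$ we have $V_\alpha\subseteq W_\alpha\subseteq W_{\beta+1}$ (because $\beta+1\le\alpha$ and the $W$'s are decreasing), and $W_{\beta+1}$ is disjoint from $V_\beta$, so $V_\alpha\cap V_\beta=\emptyset$; thus $\{V_\alpha:\alpha<\omega_1\}$ is an uncountable pairwise disjoint family of nonempty open sets, a contradiction.

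The heart of the matter, and the only place where the hypotheses are genuinely used, is the limit stage: compactness keeps the shrinking ``reservoir'' $\bigcap_{\beta<\alpha}W_\beta$ nonempty, while the almost $P$-space property is exactly what prevents it from having empty interior, so that the recursion can run all the way through $\omega_1$. The reduction to an open set $U_0$ with no isolated points in the first step is a minor but necessary preliminary, since the Hausdorff separation used at successor stages needs $W_\alpha$ to contain at least two points.
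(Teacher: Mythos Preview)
Your proof is correct. It differs from the paper's in structure: the paper builds a dyadic tree $\{U_s : s\in\{0,1\}^{<\omega}\}$ of nonempty open sets (possible once one has an open set with no isolated points, as in your first step), and then applies the almost $P$-space property once to each branch $\sigma\in\{0,1\}^{\N}$ to extract a nonempty open $U_\sigma$ inside the $G_\delta$ set $\bigcap_n U_{\sigma|_n}$, obtaining a pairwise disjoint family of size $\mathfrak c$. You instead run a linear transfinite recursion of length $\omega_1$, peeling off one $V_\alpha$ at each successor step and invoking the almost $P$-space property at every countable limit to keep the shrinking reservoir $W_\alpha$ open and nonempty. The paper's argument yields the stronger quantitative conclusion ($\mathfrak c$ many disjoint open sets rather than $\aleph_1$) and uses the almost $P$-property only at the very end; your argument is somewhat more self-contained in that compactness is invoked explicitly where it is needed (the paper's branch intersections also require a closure condition plus compactness to be nonempty, a point the paper leaves implicit). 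Either is entirely adequate for contradicting the countable chain condition.
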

\begin{proof}
We may assume that $K$ has at most countably many
isolated points. This excludes scattered spaces where isolated points form
a dense open set whose complement would be $G_\delta$ under the above assumption.
 In any nonscattered compact space we can construct a family
$\{U_s: s\in \{0,1\}^n, n\in \N\}$ of nonempty open sets such that $U_s\subseteq U_t$
whenever $t\subseteq s$ and $U_s\cap U_t=\emptyset$ whenever $s\cup t$ is not a function
where $s, t\in  \{0,1\}^n$ and $n\in \N$. If $K$ is a weak $P$-space for every
 $\sigma\in\{0,1\}^\N$ there is nonempty open $U_\sigma\subseteq U_s$ for all 
$s\in \{0,1\}^n$ with $s\subseteq \sigma$. It follows that $\{U_\sigma: \sigma\in \{0,1\}^\N\}$
is a pairwise disjoint family of nonempty open sets of $K$ of cardinality continuum.
\end{proof}

The compact spaces constructed in \cite{few, plebanek, big, fajardo, grande} all 
satisfy the  countable chain condition (which is essentially used), and so they are not
almost $P$-spaces. It follows that the constructions like in \cite{few, plebanek, big, fajardo, grande}
cannot be used to conclude the main result of this paper.

The crucial properties of $P$-spaces in the context of multiplications and
weakly compact operators are the following:

\begin{lemma}\label{weaklycpspace} Suppose that $K$  is a compact Hausdorff space which
does not satisfy the countable chain condition. Then no weakly compact
operator $T: C(K)\rightarrow C(K)$ is injective.
\end{lemma}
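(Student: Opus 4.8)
The plan is to prove the contrapositive: given an injective weakly compact $T:C(K)\to C(K)$, I will produce a nonzero $f\in C(K)$ with $Tf=0$, which is absurd. The idea is to combine the uncountable disjoint family of open sets coming from the failure of the countable chain condition with the fact that the measures $T^*\delta_y$ ($y\in K$) constitute a relatively weakly compact, hence ``uniformly countably additive'', subset of $M(K)$, so that almost all of the open sets are invisible to all of these measures.

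First I would fix an uncountable pairwise disjoint family $\{V_\alpha:\alpha<\omega_1\}$ of nonempty open subsets of $K$, which exists precisely because $K$ fails the countable chain condition. Using complete regularity of the compact Hausdorff space $K$, choose for each $\alpha$ a point $x_\alpha\in V_\alpha$ and a continuous function $f_\alpha:K\to[0,1]$ with $f_\alpha(x_\alpha)=1$ and $f_\alpha$ vanishing off $V_\alpha$; thus each $f_\alpha$ is a nonzero norm-one element of $C(K)$ supported inside $V_\alpha$.

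Now assume $T:C(K)\to C(K)$ is weakly compact. By Gantmacher's theorem $T^*:M(K)\to M(K)$ is also weakly compact, so the set $\mathcal M:=\{T^*\delta_y:y\in K\}\subseteq T^*(\mathrm{Ball}(M(K)))$ is relatively weakly compact in $M(K)$. At this point I would invoke the classical Dieudonné--Grothendieck characterization of weak compactness in $M(K)$: a relatively weakly compact subset of $M(K)$ is uniformly countably additive, so for every sequence $(W_n)$ of pairwise disjoint open (equivalently Borel) subsets of $K$ one has $\sup_{\mu\in\mathcal M}|\mu|(W_n)\to 0$. Applying this to countable subfamilies of $\{V_\alpha\}$: for each $\varepsilon>0$ only finitely many $\alpha<\omega_1$ can satisfy $\sup_{\mu\in\mathcal M}|\mu|(V_\alpha)\ge\varepsilon$ (a countable subfamily violating this would contradict uniform countable additivity), so, letting $\varepsilon$ range over $1/k$, only countably many $\alpha<\omega_1$ satisfy $\sup_{\mu\in\mathcal M}|\mu|(V_\alpha)>0$. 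Fix $\alpha^*$ outside this countable set, so that $|T^*\delta_y|(V_{\alpha^*})=0$ for every $y\in K$.

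Finally I would compute, for the nonzero function $f:=f_{\alpha^*}$ and any $y\in K$,
$$(Tf)(y)=(T^*\delta_y)(f)=\int_K f_{\alpha^*}\,d(T^*\delta_y)=\int_{V_{\alpha^*}} f_{\alpha^*}\,d(T^*\delta_y)=0,$$
using that $f_{\alpha^*}$ vanishes off $V_{\alpha^*}$ while $T^*\delta_y$ has vanishing total variation on $V_{\alpha^*}$. Hence $Tf=0$ with $f\neq 0$, so $T$ is not injective, as required. The only substantive ingredient beyond routine bookkeeping is the measure-theoretic input in the third paragraph; I expect the main obstacle to be invoking it in exactly the form needed, namely uniform control of the total variations $|\mu|(V_\alpha)$ along an \emph{uncountable} disjoint family rather than a single sequence. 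One can equally well start from Grothendieck's version of the characterization with $\sup_{\mu}|\mu(W_n)|\to 0$ over disjoint open sets $W_n$ and then use a standard inner/outer regularity argument to pass from open subsets of $V_{\alpha^*}$ to arbitrary Borel subsets, obtaining $\mu\!\restriction_{V_{\alpha^*}}=0$ for all $\mu\in\mathcal M$, which is what the last display really requires.
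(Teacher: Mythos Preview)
Your proof is correct. The paper's argument has the same overall shape---pick nonzero functions $f_i$ supported in an uncountable pairwise disjoint family of open sets and show $T(f_i)=0$ for all but countably many $i$---but the paper reaches this conclusion by quoting as a black box (from Diestel--Uhl) that a weakly compact operator on $C(K)$ sends any bounded pairwise disjoint sequence to a norm-null sequence, and then runs the same finitely-many-per-$\varepsilon$ counting argument on $\|T(f_i)\|$ directly. You instead unpack this via duality: Gantmacher plus Dieudonn\'e--Grothendieck applied to the relatively weakly compact family $\{T^*\delta_y : y\in K\}$, obtaining the vanishing of the variations $|T^*\delta_y|(V_\alpha)$ rather than of $\|T(f_\alpha)\|$. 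The paper's route is shorter if one is willing to cite that operator-theoretic fact; yours is more self-contained and makes explicit where the measure-theoretic input (uniform countable additivity, via a control measure or the Bartle--Dunford--Schwartz theorem) actually enters. One minor remark: your passage from $\sup_\mu|\mu(W_n)|\to 0$ to $\sup_\mu|\mu|(W_n)\to 0$ is justified, but it is worth saying so explicitly (e.g.\ via the existence of a control measure for a relatively weakly compact set in $M(K)$), since the classical statement of Dieudonn\'e--Grothendieck is usually phrased with $\mu$ rather than $|\mu|$.
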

\begin{proof} Suppose that $T:C(K)\rightarrow C(K)$ is weakly compact. 
Then $||T(f_n)||\rightarrow 0$ for any sequence of pairwise disjoint (i.e., $f_n^.f_m=0$ for distinct $n,m\in\N$)
functions in $C(K)$ (\cite{diesteluhl}). Let $\{U_i: i\in I\}$ be an uncountable pairwise disjoint family of nonempty
open subsets of $K$ existing by Lemma \ref{pspaceccc} and let $f_i\in C(K)$
 be nonzero functions with supports included in
$U_i$ for each $i\in I$. It follows that $||T(f_i)||=0$, that is $T(f_i)=0$, for
all but countably many $i\in I$, and therefore $T$ is not injective.
\end{proof}

\begin{lemma}\label{injectiveisomorphism} Suppose that $K$ is a compact Hausdorff which is an almost $P$-space
and $g\in C(K)$. $T_g$ is an isomorphism onto its range if and only if $T_g$ is injective.
\end{lemma}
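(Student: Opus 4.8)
The plan is to show that the only obstruction to $T_g$ being an isomorphism onto its range is failure of injectivity, which forces $g$ to be bounded away from zero on a dense open set in a way that, using the almost $P$-space property, spreads to all of $K$.

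First I would observe that $T_g$ is injective if and only if the zero set $Z(g)=\{x\in K:g(x)=0\}$ has empty interior: if $Z(g)$ contains a nonempty open set $U$, take any nonzero $f\in C(K)$ supported in $U$ to see $T_g(f)=0$; conversely, if $Z(g)$ has empty interior and $f\neq 0$, then $f$ is bounded away from $0$ on some nonempty open $V$, and $V\not\subseteq Z(g)$, so $fg\neq 0$. Thus for the forward direction there is nothing to do, and the content is the reverse implication: assuming $T_g$ injective, equivalently $\mathrm{int}\,Z(g)=\emptyset$, I must produce $\delta>0$ with $|g(x)|\geq\delta$ for all $x\in K$, since then $\|T_g(f)\|\geq\delta\|f\|$ gives the isomorphism onto the range.

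The key step is the following: for each $n\in\N$ the set $Z_n=\{x\in K:|g(x)|\leq 1/n\}$ is a closed $G_\delta$ set (indeed $Z_n=\bigcap_m\{x:|g(x)|<1/n+1/m\}$, an intersection of countably many open sets). If some $Z_n$ were nonempty, then since $K$ is an almost $P$-space, $Z_n$ would have nonempty interior, say containing a nonempty open $U$ with $|g|\leq 1/n$ on $U$. Now I iterate: inside $U$, the set $U\cap Z_m$ for $m>n$ is again a $G_\delta$ subset of $K$; the intersection $\bigcap_{m\geq n}Z_m=Z(g)$ is then a $G_\delta$ set which, were it nonempty after intersecting with $U$, would again have nonempty interior by the almost $P$-space property. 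The cleanest way to run this: the set $Z(g)=\bigcap_n Z_n$ is a $G_\delta$ set; if it is nonempty it has nonempty interior, contradicting injectivity of $T_g$; so $Z(g)=\emptyset$. But $\{x:|g(x)|<1/n\}_{n\in\N}$ is then a decreasing sequence of open sets with empty intersection, and since $K$ is compact this forces $\{x:|g(x)|<1/n\}=\emptyset$ for some $n$ — that is, $|g|\geq 1/n$ everywhere. (Alternatively, each $Z_n$ is compact with $\bigcap_n Z_n=\emptyset$, so by the finite intersection property some $Z_n=\emptyset$.)

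The main obstacle is getting the logic of the almost $P$-space hypothesis to bite in exactly the right place: the naive statement "$Z(g)$ has empty interior" is about $Z(g)=\bigcap_n\{|g|<1/n\}$, while the almost $P$-space property concerns arbitrary nonempty $G_\delta$ sets. The resolution is that injectivity gives empty interior of the \emph{closed} set $Z(g)$, and $Z(g)$ is a $G_\delta$; an almost $P$-space cannot have a nonempty $G_\delta$ of empty interior, so $Z(g)=\emptyset$; and then compactness upgrades this to $|g|$ bounded away from $0$. Once $|g|\geq\delta>0$ on $K$, the estimate $\|fg\|\geq\delta\|f\|$ is immediate and shows $T_g$ is an isomorphism onto its (closed) range, completing the proof.
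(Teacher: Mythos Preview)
Your proof is correct and follows essentially the same idea as the paper's: the zero set $Z(g)$ is a $G_\delta$, so by the almost $P$-space property it is either empty or has nonempty interior, and the latter kills injectivity while the former (via compactness) forces $|g|\geq\delta>0$ and hence the isomorphism. The paper runs this as a two-line contrapositive (not an isomorphism $\Rightarrow$ $Z(g)\neq\emptyset$ $\Rightarrow$ $\mathrm{int}\,Z(g)\neq\emptyset$ $\Rightarrow$ not injective), so your detour through the sets $Z_n$ is unnecessary, but the argument is the same.
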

\begin{proof} If $T_g$ is not an isomorphism onto its range, then $g(x_0)=0$ for
some $x_0\in K$. Since $\{x\in K: g(x)=0\}$ is $G_\delta$, it follows that it contains a
nonempty open set $U\subseteq K$. Taking a nonzero function $f\in C(K)$ with its support
in $U$, we have $T_g(f)=0$ and hance $T_g$ is not injective.
\end{proof}

\begin{proposition}\label{Pspaceweakmultiplier} Suppose that $K$ is a compact totally disconnected almost $P$-space such that
 every operator on $C(K)$ is a weak multiplier. Then every injective linear bounded operator
on $C(K)$ is an automorphism of $C(K)$.
\end{proposition}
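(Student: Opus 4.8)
The plan is to take an injective operator $T: C(K) \to C(K)$ and show it is an automorphism. By hypothesis $T$ is a weak multiplier, so $T^* = {_{g^*}T} + S$ for some Borel $g^*: K \to K$ — wait, that should be $g^*: K \to \mathbb{R}$ in the multiplier setting, i.e. a bounded Borel function — and $S: C^*(K) \to C^*(K)$ weakly compact. Hmm, actually I need to be careful: the definition writes $g^*: K \to K$, but for the multiplication ${_{g^*}T}$ on measures defined by $\int f\, d({_{g^*}T}\mu) = \int fg\, d\mu$ to make sense we need $g^*$ real-valued and bounded; I'll read it that way. The strategy splits into two cases depending on whether $T$ is actually a weak multiplication or only a weak multiplier.

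First I would reduce to the multiplication case using the structural results already in the excerpt. Since $K$ is a compact totally disconnected almost $P$-space, I claim that $K \setminus \{x\}$ is $C^*$-embedded in $K$ for every $x \in K$: indeed if $x$ is isolated this is trivial, and if $x$ is not isolated then $\{x\}$, being a $G_\delta$ would have nonempty interior by the almost $P$-space property — so in fact no point is a non-isolated $G_\delta$; but to get $C^*$-embedding I would argue that in a totally disconnected almost $P$-space, for any point $x$ and any clopen partition refining toward $x$, a continuous function on $K \setminus \{x\}$ extends because its oscillation near $x$ is controlled on clopen neighborhoods whose intersection (a $G_\delta$ containing $x$) has interior, forcing the function to be eventually constant on a clopen neighborhood of $x$. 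By the quoted result (2.7 of \cite{few}), this gives that every weak multiplier on $C(K)$ is a weak multiplication. Hence $T = T_g + S$ with $g \in C(K)$ and $S: C(K) \to C(K)$ weakly compact. The main obstacle is precisely verifying this $C^*$-embedding claim cleanly; if it fails, one must instead argue directly on the measure side that ${_{g^*}T}$ injective forces $g^* \neq 0$ $\mu$-a.e. for enough measures, which is more delicate.

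Next, with $T = T_g + S$ in hand, I would show $g$ is nowhere zero. Suppose $g(x_0) = 0$. The zero set $Z = \{x : g(x) = 0\}$ is a closed $G_\delta$, so by the almost $P$-space property it has nonempty interior; pick a nonempty open $U \subseteq Z$. Now for any $f \in C(K)$ supported in $U$ we have $T_g(f) = 0$, so $T(f) = S(f)$. Using an uncountable pairwise disjoint family of nonempty open subsets of $U$ (which exists because $U$, being open in $K$, fails ccc — here one uses Lemma \ref{pspaceccc} applied inside $\overline{U}$, noting $\overline{U}$ is again an infinite compact almost $P$-space) and choosing norm-one $f_i$ supported in them, weak compactness of $S$ forces $\|S(f_i)\| = \|T(f_i)\| \to 0$ along the family, so $T(f_i) = 0$ for all but countably many $i$ — contradicting injectivity of $T$. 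Therefore $g$ is nowhere zero, so $g$ is bounded below, and $T_g$ is an isomorphism onto $C(K)$ (it is invertible with inverse $T_{1/g}$); by Lemma \ref{injectiveisomorphism} this is consistent, and in fact $T_g$ is surjective.

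Finally I would conclude via a Fredholm-type argument. Write $T = T_g(I + T_g^{-1} S) = T_g(I + S')$ where $S' = T_g^{-1} S$ is weakly compact (composition of a weakly compact operator with a bounded one). On $C(K)$, $S'$ being weakly compact means $(S')^2$ is compact (Gantmacher / the fact that weakly compact operators on $C(K)$ have compact square), so $I + S'$ is a Fredholm operator of index zero; since $T = T_g(I+S')$ is injective and $T_g$ is an isomorphism, $I + S'$ is injective, hence by index zero it is surjective, hence $I + S'$ is an automorphism. Therefore $T = T_g(I + S')$ is a composition of two automorphisms and is itself an automorphism of $C(K)$. The delicate points to get right are the $C^*$-embedding reduction in the first step and the passage "$\overline{U}$ is an infinite compact almost $P$-space hence non-ccc" in the second — the latter requires checking that a nonempty clopen (or regular closed) subset of an almost $P$-space is again an almost $P$-space, which holds since $G_\delta$ subsets of the subspace are traces of $G_\delta$ subsets of $K$.
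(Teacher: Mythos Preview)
Your reduction to the weak multiplication case via $C^*$-embedding is the genuine gap. The sketch you give---that for a non-isolated $x$ any descending sequence of clopen neighbourhoods has a $G_\delta$ intersection with nonempty interior, ``forcing the function to be eventually constant near $x$''---does not work: the interior of that $G_\delta$ need not contain $x$, and nothing prevents a bounded continuous $f$ on $K\setminus\{x\}$ from oscillating on every clopen neighbourhood of $x$. In fact the paper remarks explicitly, just after this proposition, that it is \emph{not known} in ZFC whether one can arrange the relevant $K$ so that every weak multiplier is a weak multiplication; the countable chain condition---which almost $P$-spaces necessarily lack by Lemma~\ref{pspaceccc}---was precisely the device used in earlier constructions to secure $C^*$-embedding of $K\setminus\{x\}$. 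So the reduction you propose cannot be taken for granted, and the fallback you mention (``argue directly on the measure side'') is not optional but the actual content of the proof.

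The paper therefore works entirely on the dual. From $T^*={_{g^*}T}+S$ one invokes the finer structural fact (Theorem~2.2(b) of \cite{few}) that the bounded Borel function $g=g^*$ has at most \emph{countably} many points of discontinuity, say $X=\{x_n:n\in\N\}$. The key step is to show $|g|$ is bounded below off a finite set: if $|g(y_n)|<1/n$ along an infinite sequence, one picks an accumulation point $y\notin X$ (this uses that $K$ has no nontrivial convergent sequences, itself a consequence of the weak-multiplier hypothesis), so $g(y)=0$ by continuity at $y$; the almost $P$-property applied to the $G_\delta$ set $\bigcap_n\big(U_n\setminus\{x_k:k<n\}\big)$, where $U_n\setminus X=g^{-1}(-1/n,1/n)\setminus X$, then yields a nonempty clopen $W$ disjoint from $X$ on which $g\equiv 0$. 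For $f$ supported in $W$ one has $T^*(\mu)(f)=S(\mu)(f)$, so $T$ restricted to such $f$ is weakly compact by Gantmacher and hence not injective by Lemma~\ref{weaklycpspace}. Thus $|g|>1/k$ except at finitely many points; modifying $g$ there to $\hat g$ gives $T^*={_{\hat g^*}T}+S'$ with ${_{\hat g^*}T}$ an automorphism of $M(K)$ and $S'$ weakly compact, hence strictly singular on $C(K)^*$. The Fredholm index-zero endgame is then essentially your step~3, carried out for $T^*$: index zero plus surjectivity of $T^*$ (from injectivity of $T$) forces $T^*$, and hence $T$, to be an automorphism.
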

\begin{proof} Let $T$ be an injective operator on $C(K)$. Since
we assume that all operators on $C(K)$ are weak multipliers, by Theorem 2.2. (b) of \cite{few} 
there is a bounded borel $g:K\rightarrow \R$
with at most countably many points of discontinuity and a weakly compact operator
$S:C(K)^*\rightarrow C(K)^*$ such that $T^*={_{g^*}T}+S$. Let $X=\{x_n: n\in \N\}$
be the set of the points of discontinuity of $g$.

First we will prove   that there is no infinite sequence $Y=\{y_n: n\in \N\}$ of points of $K$ such
that $|g(y_n)|<1/n$. Supose that there is such a $Y$ and let us
derive a contradiction. Let $y\in K\setminus X$ be an accumulation point of $Y$.
It exists as the set of accumulation points of $Y$ is closed and with no isolated points since $K$ cannot have 
nontrivial convergent sequences because this would give a 
complemented in $C(K)$ copy of $c_0$ yielding a noncommutativity of
the quotent ring of all operators on $C(K)$ divided by weakly compact operators,
contradicting by 4.5. of \cite{schlackow} the fact that all operators on $C(K)$ are weak multipliers. Moreover $g(y)=0$ since $g$ is continuous at $y$.

Consider  $V_n=g^{-1}[(-1/n, 1/n)]\setminus X$. It is a relative
open set in $K\setminus X$. Let $U_n$ be an open set of $K$ such that $U_n\setminus X=V_n$.
Let $U=\bigcap(U_n\setminus\{x_k:k<n\})$. $U\ni y$ is a non-empty $G_\delta$ set, and
so it has a nonempty interior $W$. We may assume that $W$ is clopen.
 Of course $g|_W=0$. Taking a function $f\in C(K)$
with support in $W$ and any $\mu\in M(K)$ we have 
$$T^*(\mu)(f)=\int gfd\mu+S(\mu)(f)=0+S(\mu)(f).$$
So the dual of the restriction of $T$ to functions with supports in $W$ is weakly compact and so
 by Gantmacher theorem the restriction of $T$ to functions with supports in $W$ is weakly compact.
In particular $T$ is not injective by Lemma \ref{weaklycpspace} as $W$
is an almost $P$-space. It follows that $T$ is
not injective, a contradiction. This completes the proof of the nonexistence
of $Y$ as above.

Hence there is $k\in \N$ such that $|g(x)|>1/k$ for all but finitely many points $x\in K$.
By modifying $g$ to $\hat{g}$ on this finite set of points we obtain $T^*=_{\hat{g}^\ast}T+S'$ where
$|\hat{g}(x)|>1/k$ for all points $x\in K$ and $S'=S+_{(g-\hat{g})^\ast}T$ is weakly compact on $M(K)$. 
$_{\hat{g}^\ast}$T is now an automorphism on $M(K)$, $S'$ is strictly singular as weakly compact
on a dual to $C(K)$ by \cite[VI.8.10 and p. 394]{dunfordschwartz}.
By Propositon 2 (c) 10 of \cite{LT} $T^*=_{\hat{g}^\ast}T+S'$ is Fredholm and so has a closed range
and has the same Fredholm index as $_{\hat{g}^\ast}T$ that is zero. 
As $T$ is injective, $T^*$ is onto, hence as an operator of
Fredholm index zero $T^*$ has zero kernel  an by the open mapping theorem it 
is an automorphism of $M(K)$. Hence
$T$ is an automorphism of $C(K)$ by Ex. 2.39 of \cite{fabianetal} which completes the proof of the proposition.
\end{proof}

In this paper we shall construct a compact space $K$ that satisfies the hypotheses of Proposition~\ref{Pspaceweakmultiplier}. We do not know if one can prove without extra set-theoretic axioms 
that such a $K$ can exist so that every operator on $C(K)$ is actually a weak multiplication. In fact the countable chain condition was needed in the constructions like in
\cite{few, plebanek, big, fajardo, grande} to ensure that 
$K\setminus\{x\}$ is $C^*$-embedded in $K$ for every $x\in K$ and
consequently (by 2.7 of \cite{few}) that every weak multiplier is actually a weak multiplication.
It should be noted however, that assuming an additional set-theoretic axiom $\diamondsuit$ (see \cite{jech, kunen})
such construction seems possible. 

Let us say that a Boolean algebra is an almost $P$-algebra if its Stone space is an almost $P$-space. We will use the symbols $\wedge$, $\vee$, $-$ and $\leq$ to denote the usual operations and the order in a Boolean algebra. If $a$ is an element of a Boolean algebra, we will denote by $[a]$ the corresponding clopen subset in its Stone space.

\begin{lemma}\label{sumlemma}
Suppose that $\mathcal A$ is a Boolean algebra which is an almost $P$-algebra.
Suppose that $f_n\in C(K_{\mathcal A})$ for each $n\in \N$ and
$\sum_{n\in \N} f_n(x)>\delta$ for some $x\in K_{\mathcal A}$ and
some $\delta\in \R$. Then there is $a\in \mathcal A$ such that
$$\sum_{n\in \N} f_n(y)>\delta$$
for every $y\in [a]$.
\end{lemma}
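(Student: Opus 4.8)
The plan is to deduce the statement from the defining property of almost $P$-spaces by producing, around the given point $x$, a nonempty $G_\delta$ set on which the whole series behaves well. Concretely, I will construct a nonempty $G_\delta$ subset $G\subseteq K_{\mathcal A}$ such that for every $y\in G$ the numerical series $\sum_{n\in\N}f_n(y)$ converges and has sum $>\delta$. Granting this, $G$ has nonempty interior because $K_{\mathcal A}$ is an almost $P$-space, and since the clopen sets $[a]$ ($a\in\mathcal A$) form a base for the Stone space $K_{\mathcal A}$, that interior contains some $[a]$ with $a\neq 0$; any such $a$ satisfies the conclusion because $[a]\subseteq G$. So all the work is in building $G$. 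The subtlety --- and the reason the almost $P$-space hypothesis is genuinely used --- is that the $f_n$ are not assumed nonnegative, so one cannot localize using finitely many partial sums alone, and convergence of the series at points other than $x$ must be built into $G$. The device for this is to fix witnesses for the Cauchy criterion of $\sum_n f_n(x)$ once and for all, which turns the a priori non-$G_\delta$ condition ``$\sum_n f_n(y)$ converges and is $>\delta$'' into a countable intersection of open conditions.

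In detail: set $\epsilon=\bigl(\sum_{n\in\N}f_n(x)\bigr)-\delta>0$ and write $g_m=\sum_{n\le m}f_n\in C(K_{\mathcal A})$ for the partial sums, so $g_m(x)\to\delta+\epsilon$. Since $\sum_n f_n(x)$ converges, the Cauchy criterion yields integers $M_1<M_2<\cdots$ with $\bigl|\sum_{n=m}^{m'}f_n(x)\bigr|<\epsilon/2^{k+3}$ whenever $m'\ge m>M_k$; enlarging $M_1$ if necessary we may in addition assume $g_{M_1}(x)>\delta+\tfrac12\epsilon$. For each admissible triple $(k,m,m')$ the function $y\mapsto\bigl|\sum_{n=m}^{m'}f_n(y)\bigr|$ is continuous, so $W_{k,m,m'}=\bigl\{y:\bigl|\sum_{n=m}^{m'}f_n(y)\bigr|<\epsilon/2^{k+3}\bigr\}$ is an open neighbourhood of $x$; similarly $W_0=\{y:g_{M_1}(y)>\delta+\tfrac12\epsilon\}$ is an open neighbourhood of $x$. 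Let $G$ be the intersection of $W_0$ with all the sets $W_{k,m,m'}$; this is a countable intersection of open sets, all containing $x$, hence a nonempty $G_\delta$.

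Finally I would check that $\sum_n f_n(y)>\delta$ for every $y\in G$. Fix such a $y$. For any $\eta>0$, choosing $k$ with $\epsilon/2^{k+3}\le\eta$ and using $y\in W_{k,m,m'}$ for all $m'\ge m>M_k$ shows that the partial sums $g_m(y)$ are eventually within $\eta$ of one another; hence $\sum_n f_n(y)$ converges, say to $s(y)$. Applying the bounds for $k=1$ with $m=M_1+1$ and letting $m'\to\infty$ shows that the tail of the series beyond $M_1$ has absolute value at most $\epsilon/16$, while $y\in W_0$ gives $g_{M_1}(y)>\delta+\tfrac12\epsilon$, so $s(y)>\delta+\tfrac12\epsilon-\epsilon/16>\delta$. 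Then $G$ is a nonempty $G_\delta$ in the almost $P$-space $K_{\mathcal A}$, so $\mathrm{int}(G)\neq\emptyset$, and choosing $a\in\mathcal A$ with $\emptyset\neq[a]\subseteq\mathrm{int}(G)\subseteq G$ completes the argument. The estimates here are routine; the only step needing care, as already noted, is arranging the open conditions so that they really force convergence of the series at every point of $G$, not just at $x$.
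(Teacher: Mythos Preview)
Your proof is correct, and the overall strategy --- build a nonempty $G_\delta$ neighbourhood of $x$ on which the series converges and has sum $>\delta$, then invoke the almost $P$-space property to extract a basic clopen $[a]$ inside it --- is exactly the paper's. The difference lies only in how the $G_\delta$ is manufactured. The paper takes the more direct route: for each $n$ it chooses a clopen $[a_n]\ni x$ on which $|f_n(y)-f_n(x)|<\varepsilon_n$, with $\sum_n\varepsilon_n<\sum_n f_n(x)-\delta$; on $G=\bigcap_n[a_n]$ the termwise comparison $\sum_n|f_n(y)-f_n(x)|<\sum_n\varepsilon_n$ simultaneously forces convergence of $\sum_n f_n(y)$ (the difference series is absolutely convergent) and the estimate $\sum_n f_n(y)>\delta$. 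Your approach instead controls finite blocks $\sum_{n=m}^{m'}f_n$ via fixed Cauchy witnesses $M_1<M_2<\cdots$; this also works, but the ``subtlety'' you flag --- that convergence at points other than $x$ must be built in --- is handled just as easily (indeed more cheaply) by the paper's termwise comparison, so the extra bookkeeping with the $M_k$'s is not really needed.
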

\begin{proof}
By the continuity of $f_n$, for each $n\in \N$ there is
an $a_n\in \mathcal A$ such that $x\in [a_n]$ and $f_n(y)$ is close to $f_n(x)$
for each $y\in [a_n]$. So $\bigcap_{n\in \N} [a_n]$ is a nonempty $G_\delta$ set and 
so has a nonempty interior as $K_{\mathcal A}$ is an almost $P$-space. It follows
that there is $a\in \mathcal A$ as required.
\end{proof}

\section{Separation by submorphisms instead of separation by suprema}

We say that two subsets $I$ and $J$ of a Boolean algebra $\mathcal{A}$ are orthogonal if $x\wedge y = 0$ for all $x\in I$, $y\in J$. We say that $I$ and $J$ are separated in $\mathcal{A}$ if there exists $c\in\mathcal{A}$ such that  $x\leq c$, $y\wedge c = 0$ for all $x\in I$, $y\in J$.

We will say that a function between Boolean algebras $\phi:\mathcal{A}_1\To \mathcal{A}_2$ is a submorphism if $\phi(a\wedge b) = \phi(a)\wedge \phi(b)$, $\phi(a\vee b) = \phi(a)\vee \phi(b)$ and $\phi(0)=0$. If it also satisfies $\phi(1)=1$, then we call $\phi$ a morphism.\\

\begin{theorem}\label{generalization}
Let $\mathcal{A}$ be a Boolean algebra which is an almost $P$-algebra and $K_{\mathcal A}$ its Stone space. Suppose that for every pairwise disjoint family $a = \{a_n: n <\omega\}\subset \mathcal{A}\setminus\{0\}$ there exists a submorphism $\phi_a:\mathcal{P}_a\To \mathcal{A}$ such that
\begin{enumerate}
\item $\mathcal{P}_a$ is a subalgebra of $\mathcal{P}(\omega)$ that contains all finite sets.
\item $\phi_a(\{n\}) = a_n$
\item For any infinite $\sigma\subset \omega$, and for any family of pairwise disjoint $$\{b_n , n\in \sigma\}\subset \mathcal{A}\setminus\{0\}$$
orthogonal with $\{a_n: n\in \N\}$, there exists $\tau\subset\sigma$ such that
\begin{enumerate}
\item $\tau\in\mathcal{P}_a$
\item $\{b_n : n\in \tau\}$ and $\{b_n : n\in\sigma\setminus\tau\}$ are not separated in $\mathcal{A}$.
\end{enumerate}
\end{enumerate}
Then every operator $T:C(K_{\mathcal A})\To C(K_{\mathcal A})$ is a weak multiplier.\\
\end{theorem}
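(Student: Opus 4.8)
The plan is to show that an arbitrary operator $T:C(K_{\mathcal A})\To C(K_{\mathcal A})$ is a weak multiplier by following the standard ``few operators'' template: one extracts from $T$ a Borel function $g^*$ on $K_{\mathcal A}$ so that $T^*$ agrees with ${_{g^*}T}$ modulo a weakly compact perturbation. The key technical input is always an argument that prevents $T$ from ``mixing'' disjointly supported functions, and this is exactly where hypothesis (3) replaces the usual separation-by-suprema machinery. So first I would recall the criterion from \cite{few} (Theorem 2.2) reducing ``$T$ is a weak multiplier'' to the statement that for every pairwise disjoint sequence $(a_n)$ in $\mathcal A\setminus\{0\}$ and every sequence of functionals $(\mu_n)$ with $\mu_n$ supported on $[a_n]$, after passing to a subsequence one has $\sum_n \|$(off-diagonal part of $T^*\mu_n)\|<\infty$, or some equivalent ``$T$ is locally a multiplication on a disjoint sequence'' condition. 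The almost $P$-algebra hypothesis enters via Lemma~\ref{sumlemma} and Lemma~\ref{weaklycpspace}: any time we produce a clopen set on which (the off-diagonal part of) $T$ is small in an $\ell_1$-summable sense, almost-$P$-ness lets us upgrade this to $T$ being weakly compact there, hence not injective on that piece.

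Second, and this is the heart of the matter, I would run the usual dichotomy/diagonalization: given a disjoint sequence $a=\{a_n\}$, suppose toward a contradiction that $T$ is ``very non-multiplicative'' along $a$, i.e. for every subsequence the off-diagonal mass fails to be summable. Passing to a subsequence indexed by an infinite $\sigma\subseteq\omega$, one locates for each $n\in\sigma$ a witness clopen set $[b_n]$ (disjoint from all the $a_m$, and the $b_n$ pairwise disjoint) on which $T(f_n)$ has non-negligible mass, where $f_n$ is supported in $[a_n]$ — this is the step that forces the orthogonality-with-$\{a_n\}$ hypothesis in (3). Now apply (3) to get $\tau\subseteq\sigma$, $\tau\in\mathcal P_a$, with $\{b_n:n\in\tau\}$ and $\{b_n:n\in\sigma\setminus\tau\}$ not separated in $\mathcal A$. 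Applying the submorphism $\phi_a$ to $\tau$ produces an element $c=\phi_a(\tau)\in\mathcal A$ with $c\wedge a_n = \phi_a(\tau)\wedge\phi_a(\{n\}) = \phi_a(\tau\cap\{n\})$, which equals $a_n$ for $n\in\tau$ and $0$ for $n\in\sigma\setminus\tau$ — so $c$ separates the ``inputs'' $\{a_n:n\in\tau\}$ from $\{a_n:n\in\sigma\setminus\tau\}$. But then, using $c$ together with the continuity of the images and Lemma~\ref{sumlemma}, one shows the images $T(f_n)$ for $n\in\tau$ and for $n\in\sigma\setminus\tau$ must have their supports ($b_n$'s) separated by a clopen set derived from $c$ and the continuity of $T(\sum f_n)$, contradicting the non-separation half of (3). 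Hence no such sequence exists, meaning $T$ is, along every disjoint sequence, a multiplication modulo summable error.

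Third, I would assemble these local statements into a global one: a standard gliding-hump / exhaustion argument (as in \cite{few}) produces the Borel function $g^*$ — essentially $g^*(x)$ is the ``multiplier value'' read off from $T^*\delta_x$ — with at most countably many discontinuities, and shows $S := T^* - {_{g^*}T}$ is weakly compact on $M(K_{\mathcal A})$, because on any disjoint sequence its norms are summable and on $C(K_{\mathcal A})$ weak compactness is detected on disjoint sequences (\cite{diesteluhl}, as in Lemma~\ref{weaklycpspace}). Totally-disconnectedness is used throughout to replace open sets by clopen sets, i.e. by elements of $\mathcal A$, so that $\phi_a$ and Lemma~\ref{sumlemma} apply cleanly.

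The main obstacle I expect is step two: correctly setting up the ``witness'' sets $b_n$ so that they are genuinely orthogonal to $\{a_n\}$ and pairwise disjoint while still capturing enough of the off-diagonal mass of $T$, and then turning the combinatorial non-separation conclusion of (3) into an analytic contradiction. The delicate point is that ``not separated in $\mathcal A$'' is a purely Boolean statement, whereas what one needs is that $T$ restricted to a suitable clopen piece is weakly compact; bridging these requires carefully choosing the $f_n$ and exploiting that $c=\phi_a(\tau)$ separates the supports of the \emph{inputs}, so that a single function $f=\sum_{n\in\tau}f_n$ lives on $[c]$ while $\sum_{n\in\sigma\setminus\tau}f_n$ lives off $[c]$, forcing (via continuity of $Tf$ and almost-$P$-ness through Lemma~\ref{sumlemma}) a clopen separation of the $b_n$'s — the contradiction. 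Getting the quantifiers and the order of choices right there, rather than the soft functional-analytic wrapper, is the crux.
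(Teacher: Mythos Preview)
Your outline captures the paper's Case~1 but misses the essential dichotomy, and the gap is exactly at the point you yourself flag as ``delicate''. You write that the single function $f$ supported on $[c]=[\phi_a(\tau)]$, via continuity of $Tf$ and Lemma~\ref{sumlemma}, forces a clopen separation of the $b_n$'s. For this you need $T(1_{[\phi_a(\tau)]})$ to be large on $[b_k]$ when $k\in\tau$ and small when $k\notin\tau$; that in turn requires
\[
T(1_{[\phi_a(\tau)]})\big|_{[b_k]} \;=\; \sum_{n\in\tau} T(1_{[a_n]})\big|_{[b_k]}
\]
(or something within $\varepsilon/3$ of it). But $\phi_a(\tau)$ is in general strictly larger than any finite join of the $a_n$, and there is no reason for $T$ to ``commute'' with the submorphism in this way. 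The paper's proof is organized precisely around this obstruction: Case~1 is your argument under the \emph{assumption} that the displayed additivity holds (after shrinking the $b_k$ to suitable $c_k$), and then condition~(3) does yield the contradiction you describe.

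What you are missing is Case~2: when the additivity fails for every infinite $\sigma$, the paper runs a transfinite construction over an almost disjoint family $\{\sigma_\xi:\xi<\omega_1\}$ of infinite subsets of $\N$, producing for each $\xi$ a set $\tau_\xi\subseteq\sigma_\xi$ in $\mathcal P_a$, shrinking clopens $c_k^\xi\leq b_k$, and indices $(n_\xi,m_\xi)$ witnessing $|T(1_{\phi_a(\tau_\xi)})(x)-\sum_{n\in\tau_\xi}T(1_{[a_n]})(x)|>1/m_\xi$ on $[c_{n_\xi}^\xi]$. A pigeonhole on the pair $(n_\xi,m_\xi)$ then gives arbitrarily many such $\tau_{\xi_j}$ with the \emph{same} witnessing clopen and error; since the $\tau_{\xi_j}$ are almost disjoint, the $\phi_a(\tau_{\xi_j}')$ are pairwise disjoint, and summing the resulting estimates contradicts $\|T\|<\infty$. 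This is a genuinely different mechanism from your separation argument --- it uses neither condition~(3) nor non-separation, only almost disjointness and boundedness of $T$ --- and your sketch contains no substitute for it. Also, your step~3 (constructing $g^*$ by a gliding hump) is unnecessary: the criterion from \cite{few} already packages the equivalence, so once the bad sequence $(a_n,x_n)$ is shown not to exist, $T$ is a weak multiplier with no further assembly required.
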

\begin{proof}
 Suppose that $\mathcal A$ is a as above and that $T:C(K_{\mathcal A})
\rightarrow C(K_{\mathcal A})$ is not a weak multiplier. By 2.1 and 2.2 of \cite{few} 
there is an $\varepsilon>0$,  an antichain $\{a_n: n\in \N\}$ of $\mathcal A$ and points $x_n\in K$ such that
$x_n\not \in [a_n]$ and 
$$|T(1_{A_n})(x_n)|>\varepsilon.\leqno 1)$$
By a slight modification
of the points $x_n$ we may assume that they are not in the closed and nowhere dense set
$F={\overline{\bigcup_{n\in \N}[a_n]}}\setminus\bigcup_{n\in \N}[a_n]$.

By applying  the Ramsey theorem \cite[Theorem 9.1]{jech}
to a coloring $c: [\N]^2\rightarrow \{0,1\}$ where $c(n,m)=1$ iff $x_n\in [a_m]$ or $x_m\in [a_n]$
and the fact that $\{a_n: n\in \N\}$ is an antichain we may assume, by taking an infinite
 subset of $\N$  that $x_n\not\in [a_m]$
for any two distinct $n,m\in \N$.

By applying the Rosenthal lemma \cite[Lemma 1.1]{rosenthal} by taking an infinite subset of $\N$ we may assume that
$\Sigma_{n\in \N\setminus\{k\}} |T(1_{[a_n]})(x_k)|<\varepsilon/3$ holds for all $k\in \N$.
Of course if $|T(1_{[a_n]})(x_k)|<\delta$, for some $\delta>0$, 
 then there is  $b^k_n\not=0$ in $\mathcal A$ such that $x_k\in [b^k_n]$
and $||T(1_{[a_n]})|[b^k_n]||<\delta$. Since $\mathcal A$ is an almost $P$-algebra
there are $0\not=b_k\leq b_n^k$ for all $k\in \N$, so  by the choice of $x_n$ outside of $F$ we may assume 
that $\{b_n: n\in \N\}$ and $\{a_n: n\in \N\}$ are orthogonal and
$$\Sigma_{n\in \N\setminus\{k\}} ||T(1_{[a_n]})|b_k||<\varepsilon/3.\leqno 2)$$
Moreover, by 1) and considering a bit smaller $b_k$s we may assume that 
for all $n\in \N$ and for all $x\in [b_n]$ we have 
$$|T(1_{a_n})(x)|>\varepsilon.\leqno 3)$$
Let $\phi_a$ and $\mathcal P_a$ be as in the statement  of the Theorem
for $a=\{a_n: n\in \N\}$.

\noindent{\bf Case 1.} There is an infinite $\sigma\subseteq\N$ and
$0\not=c_k\leq b_k$ for all $k\in \N$ such that  
$$T(1_{\phi_a(\tau)})|[c_k]=\sum_{n\in \tau} T(1_{[a_n]})|[c_k]$$
holds for every $\tau\subseteq\sigma$, $\tau\in\mathcal{P}_a$.
\vskip 6pt
\noindent In this case, by 2) and 3), if $k\in \tau$, for every $x\in [c_k]$ we have
$$|T(1_{\phi_a(\tau)})(x)|=|\sum_{n\in \tau^1} T(1_{[a_n]})(x)|>2\varepsilon/3$$
and if $k\not\in \tau$, for every $x\in [c_k]$ we have
$$|T(1_{\phi_a(\tau)})(x)|=|\sum_{n\in \tau^1} T(1_{[a_n]})(x)|<\varepsilon/3$$
Then
$(T(1_{\phi_a(\tau)}))^{-1}((-\infty, \varepsilon/3])$ and $(T(1_{\phi_a(\tau)}))^{-1}([2\varepsilon/3, \infty))$
 separate $\{c_n: n\in \tau\}$ from $\{c_n: n\not\in \tau\}$ whenever $\tau\subset\sigma$, $\tau\in\mathcal{P}_a$. This contradicts  
condition (3) of the Theorem.
\vskip 6pt
\noindent{\bf Case 2.} Case 1 does not hold.
\vskip 6pt
\noindent Assuming the negation of the condition from case 1, we will carry out
a transfinite inductive construction  which will contradict the boundedness
of the operator $T$. 
Let $\{\sigma_\xi: \xi<\omega_1\}$ be an almost disjoint (i.e., such that pairwise intersections
of its elements are finite) family 
of infinite subsets of $\N$. 
For all $\xi<\omega_1$ construct:
\begin{itemize}
\item an infinite $\tau_\xi\subseteq \sigma_\xi$,
\item an antichain $\{c^\xi_k: k\in \N\}$  such that $0\not=c_k^{\xi'}\leq c_k^{\xi}\leq b_k$ for all $k$
and all $\xi\leq \xi'<\omega_1$,
\item $n_\xi, m_\xi\in \N\setminus\{0\}$
\end{itemize}
such that for all $x\in [c^\xi_{n_\xi}]$ we have
$$|T(1_{\phi_a(\tau_\xi)})(x)-\sum_{n\in \tau_\xi} T(1_{a_n})(x)|>1/m_\xi.\leqno (4)$$

The possiblity of such a construction follows from the fact that $K_{\mathcal A}$ is an almost $P$-space
Lemma \ref{sumlemma} and the assumption that Case 1 does not hold.

A single pair $(n', m')$ has appeared infinitely many times as $(n_\xi, m_\xi)$. Let $i\in \N$ be 
such that $i/6m'>||T||$ and consider $\xi_1<...< \xi_i$ such that $(n_{\xi_i}, m_{\xi_i})=(n', m')$.
Let $n_0\in \N$ be such that the pairwise intersections of all $\tau_{\xi_1}, ..., \tau_{\xi_i}$
be included in $\{0, ..., n_0-1\}$.  Puting $\tau_\xi'=\tau_\xi\setminus \{0, ..., n_0-1\}$ and noting that 
$$1_{\phi_a(\tau_\xi)}=1_{\phi_a(\tau_\xi')}+\sum_{n\in \tau_\xi\cap n_0} 1_{a_n}$$
we conclude from (4) that 
$$|T(1_{\phi_a(\tau_{\xi_j}')})(x)-\sum_{n\in \tau_{\xi_j}'} T(1_{a_n})(x)|>1/m'$$
for all $x\in [c^{\xi_i}_{n'}]$ and all $1\leq j\leq i$. 
By taking $n_0'>n_0$  and using (2) we may assume that
$|\sum_{n\in \tau_{\xi_j}'} T(1_{a_n})(x)|<1/3m'$ and so
$|T(1_{\phi_a(\tau_{\xi_j}')})(x)|>2/3m'$ for
all $x\in [c^{\xi_i}_{n'}]$ for all  $1\leq j\leq i$ and consequently for some  $F\subseteq \{1, ..., i\}$ 
of cardinality not smaller than $i/2$ and some $x\in [c^{\xi_i}_{n'}]$ we have
$$|\sum_{j\in F}T(1_{\phi_a(\tau_{\xi_j}')})(x)|>(1/3m')(i/2)>||T||.$$
However $\tau_\xi'$s are pairwise disjoint, so $\phi_a(\tau_\xi')$s are pairwise disjoint.
It follows that $||\sum_{j\in F}1_{\phi_a(\tau_\xi')}||=1$ and so
$||T(\sum_{j\in F}1_{\phi_a(\tau_{\xi_j}')})||\leq ||T||$, a contradiction.

\end{proof}

\emph{Remarks.} Theorem~\ref{generalization} holds true even without the assumption that $\mathcal{A}$ is an almost $P$-algebra. The proof in the general case would follow the steps of \cite[Theorem 3.3.2]{iryna}, which corresponds to the case in which $\mathcal{P}_a$ is the algebra of all subsets $\tau\subset\omega$ such that $\{a_n : n\in\tau\}$ has a supremum in $\mathcal{A}$, and $\phi_a(\tau) = \bigvee\{a_n : n\in\tau\}$. We decided to include a self-contained proof, and the assumption that $\mathcal{A}$ is a $P$-agebra simplifies the argument and it is enough for our purposes.

\begin{lemma}\label{keeppromise}
Let $\hat{\mathcal{A}}\subset \hat{\mathcal{B}}$ be Boolean algebras such that $|\hat{\mathcal{A}}|<\mathfrak{c} \leq |\hat{\mathcal{B}}|$, let $\phi:\mathcal{P}(\omega)\To \hat{\mathcal{B}}$ be a submorphism such that $\phi\{n\}\in \hat{\mathcal{A}}$ for every $n<\omega$, let $I$ be a set with $|I|<\mathfrak c$ and suppose that for every $i\in I$, $X_i,Y_i\subset \hat{\mathcal{A}}$ are orthogonal sets which are nonseparated in $\hat{\mathcal{A}}$. Then, there exists an infinite set $\tau\subset \omega$ such that $X_i$ nd $Y_i$ remain unseparated in $\hat{\mathcal{A}}\langle \phi\upsilon\rangle$ for all $\upsilon\subset \tau$ and all $i\in I$. 
\end{lemma}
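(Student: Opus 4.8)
The plan is to build the set $\tau$ by a fusion/diagonalization argument over the index set $I$ together with the countably many ``atoms'' $\phi\{n\}$. First I would recall what it means for $X_i$ and $Y_i$ to remain nonseparated in the extension $\hat{\mathcal A}\langle\phi\upsilon : \upsilon\subset\tau\rangle$: a separating element $c$ in that algebra would have the form $c = (a_1\wedge \phi\upsilon_1^{\pm})\vee\cdots$ for finitely many $a_j\in\hat{\mathcal A}$ and finitely many $\upsilon_j\subset\tau$, so the obstruction to separation is controlled by finitely many Boolean combinations of the new generators $\phi\upsilon$ with old elements. The key point is that each new generator $\phi\upsilon$ with $\upsilon\subset\tau$ is, by the submorphism property, squeezed between $\bigvee_{n\in\upsilon'}\phi\{n\}$ (for finite $\upsilon'\subset\upsilon$) from below; what makes the construction work is that $\tau$ will be chosen thin enough that no nontrivial such combination can separate $X_i$ from $Y_i$ for any $i$.

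**Main steps.**

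First I would reduce to a single generator: it suffices to show that for each fixed $i$ and each candidate Boolean term $t(\phi\upsilon_1,\dots,\phi\upsilon_k; a_1,\dots,a_m)$ built from the new generators restricted to subsets of $\tau$, this term fails to separate $X_i$ from $Y_i$. Using the submorphism identities $\phi(\upsilon\wedge\upsilon')=\phi\upsilon\wedge\phi\upsilon'$, $\phi(\upsilon\vee\upsilon')=\phi\upsilon\vee\phi\upsilon'$, and $\phi(\emptyset)=0$, any such term with $\upsilon_j\subset\tau$ can be rewritten so that the new-generator part is a union of pieces of the form $\phi\upsilon$ for $\upsilon$ ranging over the atoms of the finite subalgebra of $\mathcal P(\omega)$ generated by $\upsilon_1,\dots,\upsilon_k$, all of which are still subsets of $\tau$ (or, for the ``complement'' pieces, have complement meeting $\tau$ arbitrarily). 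Second, I would fix a bijection to well-order $I$ in order type $<\mathfrak c$, list the (at most $\mathfrak c$, but we only have $<\mathfrak c$ many relevant ones per step) potential finite data, and build a decreasing sequence of infinite subsets of $\omega$, at each stage killing one potential separation while ensuring nonseparation is preserved; since $|I|<\mathfrak c$ and $|\hat{\mathcal A}|<\mathfrak c$, the total number of constraints is $<\mathfrak c$, so a standard transfinite recursion of length $<\mathfrak c$ (taking a pseudo-intersection at limit stages of countable cofinality, and using that $<\mathfrak c$ decreasing sequences of infinite sets have an infinite pseudo-intersection) produces the desired $\tau$. At a successor step, given the current infinite set $s$ and a threatened separation of $X_i,Y_i$ by a term involving $\phi\upsilon$'s with $\upsilon\subset s$, I would use that $X_i,Y_i$ are \emph{already} nonseparated in $\hat{\mathcal A}$ to find, inside $s$, an infinite $s'$ for which the threatened $c$ collapses (on the relevant elements) to something already in $\hat{\mathcal A}$, which therefore cannot separate; concretely one thins $s$ so that each $\phi\upsilon$ with $\upsilon\subset s'$ either lies below a fixed element of $X_i$ or above nothing of $Y_i$ is impossible without an old separation.

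**The main obstacle.**

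The hard part will be the successor step: showing that one can always thin the current infinite set so as to destroy a given potential separation while simultaneously not creating a separation for any other index, and doing so ``uniformly enough'' that the whole recursion of length $<\mathfrak c$ goes through. The delicate bookkeeping is that a single term $t$ may involve several $\upsilon_j$'s, and after thinning we must guarantee that \emph{every} subset $\upsilon\subset\tau$ (not just the finitely many we explicitly handled) yields a new generator $\phi\upsilon$ that is harmless; this is exactly where the submorphism structure is essential, since it lets us reduce an arbitrary $\upsilon\subset\tau$ to a Boolean combination of the finitely many $\upsilon_j$'s intersected with $\tau$ and a ``tail'' inside $\tau$, and a tail piece $\phi(\upsilon\cap(\tau\setminus n))$ is dominated by $\phi(\tau\setminus n)$, which we can arrange to be orthogonal to everything in $X_i\cup Y_i$ at the relevant finite stage. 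I expect the cleanest writeup mirrors the pattern of \cite[Theorem 3.3.2]{iryna} and of the transfinite construction in the proof of Theorem~\ref{generalization} above, with the cardinal arithmetic $|\hat{\mathcal A}|,|I|<\mathfrak c$ playing the role that $\omega_1$-many almost disjoint sets played there.
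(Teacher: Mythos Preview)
Your approach has a genuine gap and is also quite different from the paper's argument, which is much shorter.

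First, a misreading: the lemma asks that $X_i,Y_i$ stay nonseparated in each \emph{single-generator} extension $\hat{\mathcal A}\langle\phi\upsilon\rangle$, not in $\hat{\mathcal A}\langle\phi\upsilon:\upsilon\subset\tau\rangle$. Hence any potential separator has the simple normal form $z(\upsilon)=(b\wedge\phi\upsilon)\vee(c\wedge-\phi\upsilon)\vee d$ with $b,c,d\in\hat{\mathcal A}$ pairwise disjoint, so the relevant ``data'' for a separation is just a tuple $(i,b,c,d)$ and there are fewer than $\mathfrak c$ such tuples. You make the problem harder than it is by allowing several generators $\phi\upsilon_1,\dots,\phi\upsilon_k$ at once.

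The real problem, however, is your transfinite thinning. You propose a $\subseteq^*$-decreasing tower of infinite subsets of $\omega$ of length $<\mathfrak c$ and assert that at limit stages one can take an infinite pseudo-intersection. That statement is exactly $\mathfrak t=\mathfrak c$ (or at least $\mathfrak t$ exceeding the length of your recursion) and is \emph{not} provable in ZFC; it is consistent that $\mathfrak t=\aleph_1<|I|,|\hat{\mathcal A}|<\mathfrak c$, in which case your recursion dies at stage $\omega_1$ while there are still constraints left to handle. So the diagonalization, as written, does not go through. Your successor step is also only sketched: it is not clear how thinning $s$ to $s'$ kills the threatened separation for \emph{every} $\upsilon\subset s'$ simultaneously.

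The paper avoids all of this with a one-paragraph pigeonhole argument. Take an almost disjoint family $\mathfrak M\subset[\omega]^\omega$ of size $\mathfrak c$ and suppose no $\tau\in\mathfrak M$ works. Then for each $\tau\in\mathfrak M$ some $(i,b,c,d)$ and some $\upsilon\subset\tau$ give a separator $z(\upsilon)$ as above. Since there are fewer than $\mathfrak c$ tuples $(i,b,c,d)$, two distinct $\tau,\tau'\in\mathfrak M$ share the same tuple, with witnesses $\upsilon\subset\tau$, $\upsilon'\subset\tau'$. Because both $z(\upsilon)$ and $z(\upsilon')$ separate $X_i$ from $Y_i$, so does $z_0=(b\wedge\phi\upsilon\wedge\phi\upsilon')\vee(c\wedge-(\phi\upsilon\wedge\phi\upsilon'))\vee d$. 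But $\phi\upsilon\wedge\phi\upsilon'=\phi(\upsilon\cap\upsilon')$ and $\upsilon\cap\upsilon'\subset\tau\cap\tau'$ is finite, so $\phi(\upsilon\cap\upsilon')=\bigvee_{n\in\upsilon\cap\upsilon'}\phi\{n\}\in\hat{\mathcal A}$; hence $z_0\in\hat{\mathcal A}$ separates $X_i$ from $Y_i$, a contradiction. The almost disjoint family is what replaces your tower and sidesteps the cardinal-invariant obstacle entirely.
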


\begin{proof}
We follow a similar argument as in \cite[p. $165_9$]{few}. Let $\mathfrak{M}$ be an almost disjoint family of subsets of $\omega$ with $|\mathfrak{M}|=\mathfrak{c}$. We shall prove that there exists $\tau\in\mathfrak{M}$ that satisfies the statement of the Lemma. We proceed by contradiction, so suppose that for every $\tau\in\mathfrak{M}$ there exist $i\in I$ and $\upsilon\subset \tau$ such that $Y_i$ and $X_i$ are separated in $\hat{\mathcal{A}}\langle\phi\upsilon \rangle$. 

 The separation means that there exist pairwise disjoint $b,c,d\in \hat{\mathcal{A}}$ such that 
$$z = (b\wedge \phi\upsilon ) \vee (c \wedge- \phi\upsilon ) \vee d$$ separates $Y_i$ and $X_i$.

Since $|\mathfrak{M}|=\mathfrak{c}$, and there are less than $\mathfrak{c}$ many choices for $i$, $b$, $c$ and $d$, it follows that there exists a single $i\in I$ and three fixed elements $b$, $c$ and $d$ such that for at least two different choices of $\tau\in\mathfrak{M}$ there exists $\upsilon\subset \tau$ such that
 $$z(\upsilon) = (b\wedge \phi\upsilon ) \vee (c \wedge - \phi\upsilon ) \vee d$$ separates $Y_i$ and $X_i$. 
So pick $\tau\neq \tau'$ in $\mathfrak{M}$, $\upsilon\subset \tau$, $\upsilon'\subset \tau'$ as above. Then, since both $z(\upsilon)$ and $z(\upsilon')$ separate $Y_i$ from $X_i$,
$$ z_0 = (b \wedge \phi\upsilon\wedge \phi\upsilon') \vee (c\wedge - (\phi\upsilon\wedge \phi\upsilon')) \vee d$$
also separates $Y_i$ from $X_i$. But $z_0\in\hat{\mathcal{A}}$ since $$\phi\upsilon \wedge \phi\upsilon' = \phi(\upsilon\cap \upsilon') = \bigvee_{n\in\upsilon\cap\upsilon'}\phi\{n\}$$
as $\upsilon\cap\upsilon'$ is finite. This is a contradiction with the hypothesis that $Y_i$ and $X_i$ are nonseparated in $\hat{\mathcal{A}}$.
\end{proof}

\section{The construction}

Along this section, $\mathcal{B}$ will be the Boolean algebra of subsets of $\N$ modulo finite sets, $\mathcal{B} = \mathcal{P}(\N)/Fin$.

\begin{theorem}
There exists an infinite Boolean algebra $\mathcal{A}\subset \mathcal{B}$ which satisfies the hypotheses of Theorem~\ref{generalization}.
\end{theorem}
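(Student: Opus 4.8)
The plan is to build $\mathcal{A}$ as an increasing union $\mathcal{A} = \bigcup_{\xi<\mathfrak{c}} \mathcal{A}_\xi$ of subalgebras of $\mathcal{B} = \mathcal{P}(\N)/\mathrm{Fin}$, each of size $<\mathfrak{c}$, by a transfinite recursion of length $\mathfrak{c}$ in which we simultaneously create the submorphisms $\phi_a$ witnessing condition (3) of Theorem~\ref{generalization} and kill all potential separations. First I would fix bookkeeping: since at each stage $|\mathcal{A}_\xi|<\mathfrak{c}$, the number of pairwise disjoint countable families $a\subset \mathcal{A}_\xi\setminus\{0\}$ is $<\mathfrak{c}$, and for each such $a$ the number of pairs $(\sigma, \{b_n:n\in\sigma\})$ with $\sigma$ infinite and $\{b_n\}$ pairwise disjoint and orthogonal to $a$ inside $\mathcal{A}_\xi$ is also $<\mathfrak{c}$; an enumeration of length $\mathfrak{c}$ with each task appearing cofinally handles everything. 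The target algebra $\mathcal{A}$ should also be an almost $P$-algebra, which I would arrange by at each step only adjoining elements via the operation $\mathcal{A}_\xi \mapsto \mathcal{A}_\xi\langle \phi\upsilon\rangle$ for submorphisms $\phi:\mathcal{P}(\omega)\To\mathcal{B}$ with $\phi\{n\}\in\mathcal{A}_\xi$; one checks (or cites that it is well known for subalgebras of $\mathcal{P}(\N)/\mathrm{Fin}$ built this way) that countable intersections of nonzero elements are handled because in $\mathcal{B}$ every countable decreasing sequence of nonzero elements has a nonzero lower bound, and a subalgebra closed under the relevant operations inherits the almost $P$ property — this is the point where I would need to be careful, and I would isolate a preservation lemma: if $\mathcal{A}_\xi$ is an almost $P$-algebra and we adjoin $\phi\upsilon$ as above, the result is still an almost $P$-algebra. (Alternatively, and perhaps more cleanly, first build $\mathcal{A}$ ignoring the almost-$P$ requirement, then note $\mathcal{A}\subset\mathcal{B}$ and take a suitable further closure; but the cleanest route is to keep each $\mathcal{A}_\xi$ a $\sigma$-subalgebra-like object, i.e. closed under the countable suprema that exist.)

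The core of the recursion is the following. At stage $\xi$, suppose we have processed the task of a pairwise disjoint family $a=\{a_n:n<\omega\}\subset\mathcal{A}_\xi\setminus\{0\}$. We want a single submorphism $\phi_a:\mathcal{P}_a\To\mathcal{A}$ (where $\mathcal{P}_a$ will be the algebra generated by $\mathrm{Fin}$ together with one infinite set $\tau_a$, or countably many such sets added over the subsequent $<\mathfrak{c}$ stages) with $\phi_a\{n\}=a_n$, and such that condition (3) holds: for every infinite $\sigma$ and every pairwise-disjoint $\{b_n:n\in\sigma\}$ orthogonal to $a$ there is $\tau\subset\sigma$, $\tau\in\mathcal{P}_a$, with $\{b_n:n\in\tau\}$ and $\{b_n:n\in\sigma\setminus\tau\}$ nonseparated in $\mathcal{A}$. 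The device is Lemma~\ref{keeppromise}: given the canonical submorphism $\phi:\mathcal{P}(\omega)\To\mathcal{B}$ sending $\{n\}\mapsto b_n$ (well-defined since the $b_n$ are pairwise disjoint in $\mathcal{B}$), and given the at-most-$<\mathfrak{c}$-many orthogonal nonseparated pairs $(X_i,Y_i)$ currently recorded in $\mathcal{A}_\xi$ (including the pair $(\{b_n:n\text{ even}\},\{b_n:n\text{ odd}\})$ restricted appropriately — more precisely we first ensure by a one-step extension that the two halves of each target family are themselves nonseparated, which is automatic in $\mathcal{B}$ for an infinite almost disjoint split, and then record it), Lemma~\ref{keeppromise} produces an infinite $\tau\subset\sigma$ such that all recorded nonseparations survive in $\mathcal{A}_\xi\langle\phi\tau\rangle$. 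We then set $\tau_{a}^{(\sigma,b)}=\tau$, declare $\phi_a(\tau)=\phi\tau$ (an element of $\mathcal{B}$, now living in $\mathcal{A}_{\xi+1}:=\mathcal{A}_\xi\langle\phi\tau\rangle$), and observe that in $\mathcal{A}_{\xi+1}$ the families $\{b_n:n\in\tau\}$ and $\{b_n:n\in\sigma\setminus\tau\}$ are nonseparated — and, crucially, because every later extension in the recursion is again of the form "adjoin $\phi'\upsilon'$ while preserving all recorded nonseparations," this nonseparation persists all the way up to $\mathcal{A}$. Thus condition (3) is met in the final algebra.

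For condition (2), $\phi_a\{n\}=a_n$ holds by construction; for condition (1), $\mathcal{P}_a$ is the subalgebra of $\mathcal{P}(\omega)$ generated by the finite sets together with the countably many $\tau$'s we adjoin for this fixed $a$, so it contains all finite sets, and $\phi_a$ is a submorphism because on each generator it is defined to be $\phi$ restricted there and submorphisms compose/restrict well — one should verify that the finitely-many Boolean combinations are consistent, which works because distinct $\tau$'s adjoined for the same $a$ can be taken (by the same Lemma~\ref{keeppromise} reasoning, or by passing to an almost disjoint family of candidate $\tau$'s) to have pairwise finite intersections, so any Boolean-algebra relation among them reduces modulo finite to a trivial one and is respected by $\phi$. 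The main obstacle, as flagged, is the bookkeeping-plus-preservation bundle: one must phrase a single induction hypothesis — "$\mathcal{A}_\xi$ is an almost $P$-subalgebra of $\mathcal{B}$ of size $<\mathfrak{c}$, together with a list $\mathcal{L}_\xi$ of $<\mathfrak{c}$ orthogonal pairs nonseparated in $\mathcal{A}_\xi$, closed under the separations we have promised to preserve" — and check it is maintained by the single step $\mathcal{A}_\xi\rightsquigarrow\mathcal{A}_\xi\langle\phi\tau\rangle$ (using Lemma~\ref{keeppromise}) and at limits (unions, using that almost-$P$-ness of subalgebras of $\mathcal{B}$ and nonseparatedness are preserved under increasing unions of length $<\mathfrak{c}$, since a separator would already appear at some earlier stage). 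Once that hypothesis is correctly stated, each clause of Theorem~\ref{generalization} falls out of the construction, and $\mathcal{A}=\bigcup_{\xi<\mathfrak{c}}\mathcal{A}_\xi$ is the desired algebra; it is infinite since, e.g., $\mathcal{B}$ has infinite antichains and we may start with $\mathcal{A}_0$ containing one.
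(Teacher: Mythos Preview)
Your overall scheme (length-$\mathfrak{c}$ recursion, Lemma~\ref{keeppromise} to preserve nonseparations, bookkeeping) matches the paper's, but two steps are wrong as written.

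First, the almost-$P$ property. Your preservation lemma is false: no countably infinite Boolean algebra is an almost $P$-algebra (its Stone space is metrizable with a non-isolated point), so the intermediate $\mathcal{A}_\xi$'s cannot all be almost-$P$, and adjoining a single $\phi\upsilon$ does not preserve the property. The paper instead reserves cofinally many stages $\xi\in F_\alpha$ at which it adjoins a nonzero lower bound $x$ for a prescribed strictly decreasing sequence $(d_n^\xi)$ from $\mathcal{A}_\alpha$; only the \emph{final} $\mathcal{A}$ is almost-$P$. One must then verify that adjoining such an $x$ does not separate any recorded pair $B_+^\zeta,B_-^\zeta$; the paper does this using the special feature that no nonzero element of $\mathcal{A}_\xi$ lies below $x$ (the case where one does is trivial), which lets one push any purported separator in $\mathcal{A}_\xi\langle x\rangle$ back into $\mathcal{A}_\xi$. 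Your ``take a further closure'' alternative is not viable either, since the uncontrolled new elements could separate pairs we promised to keep nonseparated.

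Second, you apply Lemma~\ref{keeppromise} with the wrong submorphism. You take $\phi\{n\}=b_n$ and set $\phi_a(\tau)=\phi\tau$, but $\phi_a$ must satisfy $\phi_a\{n\}=a_n$; with your choice, $\{n\}\subset\tau$ would force $a_n=\phi_a\{n\}\leq\phi_a(\tau)$ while $\phi_a(\tau)$ is orthogonal to every $a_m$, so $\phi_a$ is not a submorphism at all. What must actually be adjoined is $\psi(\tau)$ for the canonical morphism $\psi:\mathcal{P}(\omega)\to\mathcal{B}$ with $\psi\{n\}=a_n$ (built by lifting the $a_n$ to pairwise disjoint representatives in $\mathcal{P}(\N)$). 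The paper applies Lemma~\ref{keeppromise} to $\phi=\psi|_{\mathcal{P}(\sigma)}$ to get an infinite $\tau\subset\sigma$ such that adjoining $\psi(\upsilon)$ for any $\upsilon\subset\tau$ preserves all recorded nonseparations, and \emph{then}, by a separate counting argument ($\mathfrak{c}$ subsets of $\tau$ versus fewer than $\mathfrak{c}$ potential separators in $\mathcal{A}_\xi$), picks $\tau^\xi\subset\tau$ so that $\{b_n:n\in\tau^\xi\}$ and $\{b_n:n\in\sigma\setminus\tau^\xi\}$ are themselves nonseparated. Your sketch conflates these two steps and attaches the new element to the wrong antichain.
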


\begin{proof}
We shall construct $\mathcal{A}$ as a union of a $\mathfrak{c}$-chain of subalgebras of $\mathcal{B}$, each subalgebra being of size less than $\mathfrak{c}$, in the form $\mathcal{A} = \bigcup_{\alpha<\mathfrak{c}}\mathcal{A}_\alpha$, $|\mathcal{A}_\alpha|<\mathfrak{c}$. The algebras $\mathcal{A}_\alpha$ are constructed by induction. For the first step, $\mathcal{A}_0$ can be any arbitrary infinite subalgebra of $\mathcal{B}$ of cardinality less than $\mathfrak c$.
Let us write $$\mathfrak{c} = \bigcup_{\alpha<\mathfrak{c}}F_\alpha \cup \bigcup_{\alpha<\beta<\mathfrak{c}}G_{\alpha\beta}$$
as a disjoint union where $|F_\alpha| = |G_{\alpha\beta}| = \mathfrak{c}$ and $F_\alpha \cap \alpha = G_{\alpha\beta}\cap \beta =\emptyset$ for each $\alpha<\beta<\mathfrak{c}$. We write $G_{\alpha\beta} = \{\xi_{\alpha\beta}^{\delta\gamma} : \delta,\gamma<\mathfrak c\}$ in such a way that $(\delta,\gamma)\mapsto \xi_{\alpha\beta}^{\delta\gamma}$ is a bijection between $\mathfrak{c}\times\mathfrak{c}$ and $G_{\alpha\beta}$. Once the algebra $\mathcal{A}_\alpha$ is defined, we make the following enumerations:\\

\begin{enumerate}
\item $\{(d_n^\xi)_{n\in \N} : \xi\in F_\alpha\}$ are all strictly decreasing sequences of $\mathcal{A}_\alpha$.\\

\item $\{\{a_n^\gamma(\alpha) : {n\in \N}\}: \gamma<\mathfrak{c}\}$ are all pairwise disjoint families in $\mathcal{A}_\alpha\setminus\{0\}$. We fix morphisms $\psi^\gamma_\alpha: \mathcal{P}(\omega)\To \mathcal{B}$ such that $\psi^\gamma_\alpha(\{n\}) = a_n^\gamma(\alpha)$. Such morphisms can be the following way:
let $\sigma_n\subseteq \N$ be pairwise disjoint sets such that $h(\sigma_n)=a_n$ where
$h:\wp(\N)\rightarrow \wp(\N)/Fin=\mathcal{B}$ is the canonical surjective homomorphism; now 
define  $\psi^\gamma_\alpha(\rho)=h[\bigcup_{n\in\rho}\sigma_n]$.\\

\item $\{\{b_n^\delta(\alpha)): {n\in\sigma^\delta(\alpha)}\} : \delta<\mathfrak{c}\}$ are all possible families where $\sigma^\delta(\alpha)$ is an infinite subset of $\N$ and $\{b_n^\delta(\alpha) : n\in\sigma^\delta(\alpha)\}$ is a family of pairwise disjoint elements of $\mathcal{A}_\alpha\setminus\{0\}$.\\

\end{enumerate}

At limit steps we shall define $\mathcal{A}_\alpha = \bigcup_{\beta<\alpha}\mathcal{A}_\beta$. At a successor stage, $\mathcal{A}_{\xi+1}$ will be the algebra generated by $\mathcal{A}_\xi$ and a certain element $x_\xi$ that we will add, $\mathcal{A}_{\xi+1} = \mathcal{A}_\xi\langle x_\xi\rangle$. When $\xi \in F_\alpha$ for some $\alpha$, $x_\xi$ will be a lower bound of $\{d_n^\xi : n<\omega\}$. When $\xi = \xi^{\gamma\delta}_{\alpha\beta}\in G_{\alpha\beta}$ for some $\alpha<\beta$ and the family $\{a_n^\gamma(\alpha),b_n^\delta(\beta), n\in\sigma^\delta(\beta)\}$ is pairwise disjoint, we shall find an infinite $\tau^\xi\subset\sigma^\delta(\beta)$ such that $B_+^\xi = \{b_n^\delta(\beta) : n\in\tau^\xi\}$ and $B_-^\xi=\{b_n^\delta(\beta) : n\in\sigma^\delta(\beta)\setminus \tau^\xi\}$ are nonseparated in $\mathcal{A}_\alpha$ and we shall define $x_\xi = \psi_\alpha^\gamma(\tau^\xi)$, $\mathcal{A}_{\xi+1} = \mathcal{A}_\xi\langle x_\xi\rangle$. In this way, in the steps in $
 F_\alpha$ we take care that we obtain an almost $P$-space, while in the steps in $G_{\alpha\beta}$ we take care that the hypotheses of Theorem~\ref{generalization} are satisfied\footnote{When $a=\{a_n^\gamma(\alpha) : n<\omega\}$, we will have $\mathcal{P}_a = \{x\subset\omega : \psi^\gamma_\alpha(x)\in\mathcal{A}\}$ and $\phi_a$ will be the restriction of $\psi^\gamma_\alpha$ to $\mathcal{P}_a$.}. In order this to work we must make sure that when we construct $\mathcal{A}_{\xi+1}$, those families of the form $B^{\xi'}_+$ and $B^{\xi'}_-$ that were chosen to be nonseparated in some previous step $\xi'<\xi$, remain nonseparated in $\mathcal{A}_{\xi+1}$, assuming inductively that they were kept nonseparated in $\mathcal{A}_\xi$. More precisely, this is done as follows:\\

\begin{enumerate}

\item If $\xi\in G_{\alpha\beta}$ for some $\alpha<\beta\leq\xi$, then $\xi = \xi^{\gamma\delta}_{\alpha\beta}$ for some $\gamma,\delta<\mathfrak{c}$. We have $\{a_n = a_n^\gamma(\alpha) : n<\omega\}$ and $\psi=\psi^\gamma_\alpha:\mathcal{P}(\omega)\To\mathcal{B}$ on the one hand, and $\{b_n = b_n^\delta(\beta) : n\in\sigma = \sigma^\delta(\beta)\}$ on the other hand. If $a_n\wedge b_m \neq\emptyset$ for some $n,m\in\sigma$ we do nothing and just define $\mathcal{A}_{\xi+1} = \mathcal{A}_\xi$ (we call this a trivial step). Otherwise, we can apply Lemma~\ref{keeppromise} for $\hat{\mathcal{B}} = \mathcal{B}$, $\hat{\mathcal{A}} = \mathcal{A}_\xi$, $\phi = \psi|_{\mathcal{P}(\sigma)}: \mathcal{P}(\sigma)\To \mathcal{B}$, and the sets $B^\zeta_+$ and $B^\zeta_-$ which are kept nonseparated in $\mathcal{A}_\xi$ for $\zeta<\xi$ by the inductive hypothesis.
Let $\tau\subset \sigma$ be given by Lemma~\ref{keeppromise}. Since $|\mathcal{A}_\xi|<\mathfrak c$ there are less than $\mathfrak c$ many $\upsilon\subset \tau$ such that $\{b_n : n\in\upsilon\}$ and $\{b_n : n\in\sigma\setminus\upsilon\}$ are separated. So choose $\tau^\xi\subset \tau$ such that $\{b_n : n\in\upsilon\}$ and $\{b_n : n\not\in\sigma\setminus\upsilon\}$ are not separated, $x_\xi = \psi(\tau^\xi)$ and $\mathcal{A}_{\xi+1} = \mathcal{A}_\xi\langle x_\xi\rangle$.\\

\item If $\xi\in F_\alpha$ for some $\alpha\leq\xi$, then $\{d_n^\xi : n<\omega\}$ is a strictly decreasing sequence of elements of $\mathcal{A}_\alpha\subset \mathcal{A}_\xi$, and we must add an element below it. If there exists some element $a\in\mathcal{A}_\xi$ such that $a< d_n^\xi$ for all $n$, then we do not need to add anything, and we make just $\mathcal{A}_{\xi+1} = \mathcal{A}_\xi$. Otherwise, choose $x_\xi = x\in\mathcal{B}\setminus\{0\}$ such that $x<d_n^\xi$ for all $n$. Notice that if $y\in \mathcal{A}_\xi$ and $y\leq x$ then $y=0$.

%
We have to check that $B^\zeta_+$ and $B^\zeta_-$ remain unseparated in $\mathcal{A}_{\xi+1}$ for $\zeta<\xi$. So suppose for contradiction that they were separated by some element $z = (b\wedge x) \vee (c\wedge- x) \vee d$ with $b,c,d\in \mathcal{A}_\xi$ pairwise disjoint. We claim that $z' = (c\vee d)$ also separate $B^\zeta_+$ and $B^\zeta_-$, which is a contradiction since $z'\in\mathcal{A}_\xi$. On the one hand, if $u\in B^\zeta_+$ then $$u< z = (b\wedge x) \vee (c\wedge - x) \vee d < b\vee c \vee d.$$ But moreover, $u\wedge b< z\wedge b < x$ and since $u\wedge b \in\mathcal{A}_\xi$, we get that $u\wedge b = 0$. This proves that $u< z' = (c\vee d)$. On the other hand if $v\in B^\zeta_-$, then $v\wedge z = 0$.  This implies that $v\wedge c\wedge -e = 0$, so $v\wedge c< e$ and since $v\wedge c \in\mathcal{A}_\xi$, we get that $v\wedge c = 0$. It is clear that we also have $v\wedge d = 0$, so $v\wedge z' = v\wedge (c\vee d) = 0$ as required.   
\end{enumerate}
\end{proof}

\begin{corollary}
There exists a Banach space $X\subset \ell_\infty/c_0$ of the form $X=C(K)$ such that every injective operator $T:X\To X$ is surjective.
\end{corollary}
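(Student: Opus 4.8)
The plan is to assemble the corollary purely by combining results already established in the paper. The previous theorem produces an infinite Boolean algebra $\mathcal{A}\subset\mathcal{B}=\mathcal{P}(\N)/\mathrm{Fin}$ which is an almost $P$-algebra (guaranteed by the steps indexed in $F_\alpha$, which ensure every strictly decreasing sequence has a nonzero lower bound, hence every nonempty $G_\delta$ in the Stone space $K_{\mathcal{A}}$ has nonempty interior) and which satisfies the hypotheses of Theorem~\ref{generalization} (guaranteed by the steps indexed in $G_{\alpha\beta}$). First I would set $K = K_{\mathcal{A}}$, its Stone space, and $X = C(K)$. Since $\mathcal{A}$ is a subalgebra of $\mathcal{B}$, the Stone space $K$ is a continuous image of (a closed subspace of) the Stone space of $\mathcal{B}$, which is $\beta\N\setminus\N$; more usefully, $C(K)$ embeds isometrically as a subspace of $C(K_{\mathcal{B}}) = \ell_\infty/c_0$ via the dual of the quotient map $K_{\mathcal{B}}\To K$ induced by the inclusion $\mathcal{A}\subset\mathcal{B}$. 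This gives the ambient containment $X\subset\ell_\infty/c_0$ claimed in the statement.

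Next I would verify the three hypotheses of Proposition~\ref{Pspaceweakmultiplier} for this $K$: that $K$ is compact, totally disconnected, an almost $P$-space, and that every operator on $C(K)$ is a weak multiplier. Compactness and total disconnectedness are automatic for any Stone space of a Boolean algebra. That $K$ is an almost $P$-space is exactly the content of $\mathcal{A}$ being an almost $P$-algebra, which the theorem delivers. That every operator on $C(K)$ is a weak multiplier follows from Theorem~\ref{generalization}, whose hypotheses the theorem also delivers. Then Proposition~\ref{Pspaceweakmultiplier} applies verbatim and yields: every injective bounded linear operator $T:C(K)\To C(K)$ is an automorphism of $C(K)$, in particular surjective.

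Finally I would note that $X = C(K)$ is infinite dimensional because $\mathcal{A}$ is an infinite Boolean algebra, so $K$ is infinite and $C(K)$ is infinite dimensional. This completes the proof, and it also re-establishes the main Theorem~1.1 of the paper. There is essentially no obstacle here: the corollary is a bookkeeping step that packages the construction theorem together with Proposition~\ref{Pspaceweakmultiplier}. The only point requiring a sentence of care is the identification of $C(K_{\mathcal{A}})$ as a subspace of $\ell_\infty/c_0$ — one should observe that the inclusion $\mathcal{A}\hookrightarrow\mathcal{B}$ dualizes via Stone duality to a surjection $K_{\mathcal{B}}\twoheadrightarrow K_{\mathcal{A}}$, and a surjection between compact spaces induces an isometric embedding of the corresponding spaces of continuous functions; since $C(K_{\mathcal{B}}) = C(\beta\N\setminus\N) \cong \ell_\infty/c_0$, we get $X = C(K_{\mathcal{A}})\subset\ell_\infty/c_0$ as desired.
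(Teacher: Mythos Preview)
Your proposal is correct and is exactly the argument the paper has in mind: the corollary is stated in the paper without proof, as an immediate consequence of combining the construction theorem (which produces an infinite almost $P$-algebra $\mathcal{A}\subset\mathcal{B}$ satisfying the hypotheses of Theorem~\ref{generalization}) with Theorem~\ref{generalization} and Proposition~\ref{Pspaceweakmultiplier}. Your extra sentence justifying the containment $C(K_{\mathcal{A}})\subset \ell_\infty/c_0$ via Stone duality and the identification $C(K_{\mathcal{B}})\cong \ell_\infty/c_0$ is the standard (and correct) way to see this, and is the only point the paper leaves entirely to the reader.
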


\section{Invariant subspaces}

\begin{proposition}
If $K=K_\mathcal{A}$ is as in Theorem~\ref{generalization}, then for every operator $T:X\To X$ there exists a proper nonempty clopen subset $L$ of $K$ such that $\{f\in C(K) : f|_{K\setminus L} = 0\}$ is an invariant subspace of $T$.
\end{proposition}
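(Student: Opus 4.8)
The plan is to use the structure theorem for operators on $C(K_\mathcal{A})$: since $\mathcal{A}$ satisfies the hypotheses of Theorem~\ref{generalization}, every operator $T$ is a weak multiplier, so $T^* = {_{g^*}T} + S$ for some bounded Borel $g:K\To\R$ with at most countably many points of discontinuity and some weakly compact $S$ on $M(K)$. Following the argument already used in the proof of Proposition~\ref{Pspaceweakmultiplier}, one distinguishes two cases. If for every $n$ there is a point $y_n$ with $|g(y_n)|<1/n$, then exactly as there (passing to an accumulation point $y\notin X$ of the $y_n$, using that $K$ has no nontrivial convergent sequences, and using that $K$ is an almost $P$-space) one produces a nonempty clopen $W$ with $g|_W = 0$; then for $f$ supported in $W$ and any $\mu$ one has $T^*(\mu)(f) = S(\mu)(f)$, so $T$ restricted to $\{f : f|_{K\setminus W}=0\}$ factors through a weakly compact operator. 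Take $L = W$.

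In the complementary case, $|g(x)|>1/k$ for all but finitely many $x\in K$; modifying $g$ on that finite set we get $\hat g$ bounded away from $0$ everywhere, still with countably many discontinuity points, and $T^* = {_{\hat g^*}T} + S'$ with $S'$ weakly compact. Here I need to extract an invariant clopen set directly from the multiplicative part. Let $X = \{x_n : n\in\N\}$ be the discontinuity points of $\hat g$. The key observation is that, because $K$ is an almost $P$-space with at most countably many isolated points, one can find a nonempty clopen $L\subseteq K$ with $L\cap X$ a clopen-in-$L$ set that is "small" — concretely, I would like a nonempty clopen $L$ disjoint from $X$, which exists since $X$ is countable and hence nowhere dense (its closure is a proper closed $G_\delta$-complement's... ) — more carefully: $K\setminus\overline{X}$ is open and nonempty (else $X$ is dense, but a dense set of size $<\cc$ cannot be dense in an almost $P$-space without ccc by Lemma~\ref{pspaceccc}-type reasoning, since isolated points are countable), so it contains a nonempty clopen $L$ on which $\hat g$ is continuous and bounded away from $0$. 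Then for $\mu$ supported in $L$ (i.e.\ $\mu\in M(L)\subseteq M(K)$) one checks ${_{\hat g^*}T}(\mu)$ is again supported in $L$, because $\hat g|_L\in C(L)$ and multiplication by a continuous function preserves the support; thus $M(L)$ is "almost invariant" under $T^*$ modulo the weakly compact $S'$. To upgrade this to an honestly invariant subspace of $T$, I would instead run the argument on the $C(L)$ side: the restriction $T|_{Y}$, where $Y = \{f\in C(K): f|_{K\setminus L}=0\}\cong C(L)$, need not land in $Y$, but its $Y$-component differs from $T_{\hat g|_L}$ by a weakly compact perturbation, and one massages the choice of $L$ (shrinking within the Boolean algebra, using Lemma~\ref{sumlemma} and almost $P$-ness) so that the off-$L$ part of $T(f)$ vanishes identically for $f\in Y$.

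The main obstacle is exactly this last point: a weak multiplier is only a weak multiplication up to weakly compact error, so $T$ does not literally preserve any band $\{f : f|_{K\setminus L}=0\}$ just because $g$ is nice on $L$ — one must kill the weakly compact tail. The fix is to exploit that $K$ is an almost $P$-space without ccc (Lemma~\ref{pspaceccc}) so that weakly compact operators are far from injective (Lemma~\ref{weaklycpspace}) and, more to the point, that any weakly compact operator restricted to a suitable small clopen piece can be made to vanish: given the weakly compact $S$, the images $S(1_{[a]})$ cannot all be bounded below on disjoint clopen pieces, so by an exhaustion one finds a nonempty clopen $L$ with $T(1_{[a]})|_{K\setminus L}$ as small as we like for $a\le [L']$ for a further refinement, and then an almost-$P$ intersection argument (as in Lemma~\ref{sumlemma}) collapses "arbitrarily small on a $G_\delta$" to "identically zero on a clopen set". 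This yields the desired proper nonempty clopen $L$; note $L$ can be taken proper since $K$ itself is not the only option — if every such clopen exhaustion forced $L=K$, then $T$ itself would be a multiplication by a continuous function bounded away from zero plus weakly compact, which already gives invariant clopen pieces from any nontrivial clopen decomposition of $K$ (and $K$ has one, being infinite totally disconnected). $\square$
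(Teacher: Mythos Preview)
Your approach has a genuine gap, and it stems from working on the wrong side of the duality and from trying to control $g$ when no control is needed.

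The subspace $Y=\{f:f|_{K\setminus L}=0\}$ is $T$-invariant if and only if its annihilator $N_L=\{\mu\in M(K):|\mu|(L)=0\}$ is $T^*$-invariant. The crucial observation, which the paper uses and you miss, is that $N_L$ is \emph{automatically} invariant under ${_{g^*}T}$ for \emph{every} Borel $g$ and \emph{every} clopen $L$: if $|\mu|(L)=0$ and $f$ is supported in $L$, then $fg$ is supported in $L$, so $\int fg\,d\mu=0$. Thus all of your effort in Case~1 (forcing $g|_W=0$) and Case~2 (making $\hat g$ continuous and bounded away from $0$ on $L$) is beside the point: the multiplicative part never obstructs invariance of $N_L$, regardless of the zeros or discontinuities of $g$. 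In particular, in your Case~1 the conclusion that $T$ restricted to $Y_W$ is weakly compact does \emph{not} say $T(Y_W)\subseteq Y_W$; nothing prevents $T(f)$ from living outside $W$.

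Once one sees that only the weakly compact piece $S$ matters, the paper finishes in one stroke: take an uncountable pairwise disjoint family $\{L_\alpha:\alpha<\omega_1\}$ of nonempty clopens (which exists by Lemma~\ref{pspaceccc}); if $N_{L_\alpha}$ fails to be $S$-invariant for every $\alpha$, one gets probability measures $\mu_\alpha$ with $|S\mu_\alpha|(L_\alpha)>\varepsilon$ for uncountably many $\alpha$, contradicting weak compactness of $S$ via the Dieudonn\'e--Grothendieck criterion. Your last paragraph gestures toward ``killing the weakly compact tail'' by an exhaustion-and-almost-$P$ argument, but this is not a proof: you never actually produce the clopen, and the mechanism you describe (making $T(1_{[a]})|_{K\setminus L}$ small and then collapsing via Lemma~\ref{sumlemma}) does not obviously terminate or yield exact vanishing. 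The Dieudonn\'e--Grothendieck argument on the dual side replaces all of this with a clean pigeonhole.
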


\begin{proof}
By Theorem~\ref{generalization}, $T$ is a weak multiplier, so $T^\ast = _{g^\ast}T+S$ where $g^\ast:K\To K$ is Borel and $S$ is weakly compact.It is enough to find a proper nonempty clopen $L$ such that the set $N_L$ of measures whose support is disjoint from $L$ is invariant under $T^\ast$. The set $N_L$ is invariant under $_{g^\ast}T$ for all clopens $L$ and all Borel functions $g$, hence it is enough to find $L$ for which $N_L$ is invariant under $S$. We prove that if $\{L_\alpha : \alpha<\omega_1\}$ is a disjoint family of nonempty clopen subsets of $K$, then there exists $\alpha$ such that $N_{L_\alpha}$ is invariant under $S$. If it was not the case, then for every $\alpha<\omega_1$ there exists a measure $\mu_\alpha\in N_{L_\alpha}$, that we can take with $\|\mu_\alpha\|=1$, such that $S\mu_\alpha|_{L_\alpha} \neq 0$. There is an $\varepsilon>0$ such that $|S\mu_\alpha|(L_\alpha)>\varepsilon$ for uncountably many $\alpha<\omega_1$. This contradicts weak compactness by the Dieudonné-Grothendieck theorem \cite[VII.14]{diestel}.
\end{proof}

The invariant subspace $Y=\{f\in C(K) : f|_{K\setminus L} = 0\}$ is complemented and isomorphic to $C(L)$, the complement being isomorphic to $C(K\setminus L)$. If $L$ a nonempty clopen set of $K$, then $L$ is still as in Theorem~\ref{generalization}, so again the Banach space $C(L)$ has the same properties as $C(K)$: $C(L)$ has density $\mathfrak{c}$ and every injective operator $C(L)\To C(L)$ is an isomorphism. But note that $C(L)$ is not isomorphic to $C(K)$, or otherwise we could construct an injective operator $C(K)\To C(K)$ which is not surjective.

\end{document}